\documentclass[12pt]{amsart}
\usepackage{amssymb,amsmath,amsthm}
\usepackage{tikz}
\usepackage{cancel}
\usepackage{mathtools,float}
\usepackage[T1]{fontenc}
\usepackage{enumerate}

\usetikzlibrary{arrows,shapes,automata,backgrounds,decorations,petri,
                positioning,patterns}
\colorlet{lightgray}{black!15}

 \addtolength{\hoffset}{-2cm}
 \addtolength{\textwidth}{3cm}
 \addtolength{\voffset}{-1cm}
 \addtolength{\textheight}{1.5cm}

\theoremstyle{plain}
\newtheorem{thm}{Theorem}[section]
\newtheorem{lem}[thm]{Lemma}

\newtheorem{cor}[thm]{Corollary}

\newtheorem{prop}[thm]{Proposition}
\theoremstyle{definition}
\newtheorem{defn}[thm]{Definition}

\newtheorem{ex}[thm]{Example}

\newcommand{\mf}[1]{\mbox{$\mathfrak #1$}}

\newcommand{\av}{\textup{Av}}
\newcommand{\cont}{\textup{Cont}}
\newcommand{\enc}{\textup{\textsf{enc}}}
\newcommand{\boks}[2]{(#1, #2)}   % A square

\title{Coincidence among families of mesh patterns}

\author[Anders Claesson]{Anders Claesson$^{\star}$}
\address{Department of Computer and Information Sciences, University of Strathclyde, Glasgow, UK}
\email{anders.claesson@strath.ac.uk}
\author[Bridget Eileen Tenner]{Bridget Eileen Tenner$^{\dagger}$}
\address{Department of Mathematical Sciences, DePaul University, Chicago, IL, United States}
\email{bridget@math.depaul.edu}
\author[Henning Ulfarsson]{Henning Ulfarsson$^{\star}$}
\address{School of Computer Science, Reykjavik University, Reykjavik, Iceland}
\email{henningu@ru.is}

\thanks{$^{\dagger}$ Research partially supported by a Simons Foundation
  Collaboration Grant for Mathematicians.  $^{\star}$ Research partially
  supported by grant 141761-051 from the Icelandic Research Fund.}

\subjclass[2010]{Primary: 05A05; Secondary: 05A15}

\begin{document}

\begin{abstract}
  Two mesh patterns are coincident if they are avoided by the same set of
  permutations. In this paper, we provide necessary conditions for this
  coincidence, which include having the same set of enclosed diagonals. This
  condition is sufficient to prove coincidence of vincular patterns, although it
  is not enough to guarantee coincidence of bivincular patterns. In addition, we
  provide a generalization of the Shading Lemma (Hilmarsson et al.), a result
  that examined when a square could be added to the mesh of a pattern.

\noindent \\
\emph{Keywords:} permutation, pattern, mesh pattern, pattern coincidence
\end{abstract}

\maketitle
\thispagestyle{empty}

\section{Introduction}

The study of patterns in permutations has grown into a rich subfield of
combinatorics.  The original notion of classical patterns --- a
subsequence of symbols having a particular relative order --- has been expanded
to include variations like vincular patterns
\cite{babson-steingrimsson, steingrimsson}, bivincular patterns
\cite{bousquet-melou claesson dukes kitaev}, Bruhat-restricted patterns
\cite{woo yong}, and barred patterns \cite{west-thesis}.  Each of these
flavors adds its own nuance to the ``relative order'' requirement of
classical patterns.  Mesh patterns, which encompass nearly all of the
previously defined flavors were introduced by Br\"and\'en and the first
author in \cite{branden-claesson}.

The main questions about patterns concern how and when a pattern might
be contained (or avoided) by arbitrary permutations.  Two patterns $\pi$
and $\sigma$, perhaps having different flavors, are \emph{coincident} if
the set of permutations that avoid $\pi$ is equal to the set of
permutations that avoid $\sigma$.  Note that this is stronger than
Wilf-equivalence, which requires that the permutations of size
$n$ that avoid $\pi$ is equinumerous to the permutations of
size $n$ that avoid $\sigma$, for all $n$.

It is particularly interesting, and often surprising, when patterns of
different types are coincident.  The second author analyzed this
phenomenon for vincular and barred patterns in
\cite{tenner-coincidental}. In the current paper, we are concerned with
the more general category of mesh patterns, and we analyze how much of a
mesh is required to distinguish it from others.  Put another way, we
examine which portions of a mesh might be superfluous.  The Shading
Lemma of the third author together with Hilmarsson, J\'onsd\'ottir,
Sigur\dh ard\'ottir, and Vi\dh arsd\'ottir~\cite{wilfshort} was the
first general result in this direction. It gives sufficient conditions
for when a square can be added to a mesh. Here we will give a more
powerful version of that lemma, which captures almost all coincidences
of small mesh patterns.

Recent work of the second author described when a mesh pattern is
coincident to a classical pattern \cite{tenner-mesh-classical}.  This
phenomenon is characterized by avoidance of a single configuration in
the mesh, called an \emph{enclosed diagonal}.  In the present work, we
show that enclosed diagonals do, in fact, provide a necessary condition
for two mesh patterns to be coincident, but this property is not
sufficient in general.  On the other hand, we do prove that for some
families $\mathcal{F}$ of mesh patterns, elements of $\mathcal{F}$
are coincident exactly when their enclosed diagonals coincide.

This paper is organized as follows. In Section~\ref{sec:mesh-defn}, we
recall the definition of a mesh pattern. In
Section~\ref{sec:previous-results}, previous results are reviewed. In
Section~\ref{sec:necessary}, we give necessary conditions for mesh
pattern coincidence. In Section~\ref{sec:enc diag enough}, we show that
common enclosed diagonals are sufficient to determine coincidence for
vincular patterns. Examples of pattern families for which coincidence is
not governed by enclosed diagonals are given in Section~\ref{sec:enc
  diag not enough}, including bivincular
patterns. In Section~\ref{sec:ssl}, we establish the Simultaneous Shading
Lemma, and future directions of research are discussed in
Section~\ref{sec:future}.

\section{Mesh patterns}\label{sec:mesh-defn}

The set $\mf{S}_n$ consists of all bijections from $[1,n] =
\{1,\ldots,n\}$ to itself. Each of these bijections is a
\emph{permutation}, and we can denote $w \in \mf{S}_n$ by the word $w =
w(1)w(2)\cdots w(n)$. Two sequences $a_1a_2\cdots a_k$ and $b_1b_2\cdots
b_k$ of real numbers are said to be \emph{order
  isomorphic} if $a_i < a_j$ if and only if $b_i < b_j$ for all $i,j\in
[1,k]$.

\begin{defn}\label{defn:classical}
  Fix $p \in \mf{S}_k$. The permutation
  $w \in \mf{S}_n$ \emph{contains} a $p$-pattern if there exist indices
  $1 \le i_1 < i_2 < \cdots < i_k \le n$ such that
  $
  w(i_1) w(i_2) \cdots w(i_k)
  $
  is order isomorphic to $p$. The subsequence $w(i_1)w(i_2) \cdots
  w(i_k)$ is an \emph{occurrence} of $p$ in $w$. If $w$ does not contain
  $p$, then $w$ \emph{avoids} $p$.
\end{defn}

Definition~\ref{defn:classical} is the classical framework for
permutation patterns.  The following example demonstrates this classical
containment and avoidance.

\begin{ex}\label{ex:classical patterns}
  The permutation $42135$ contains five occurrences of the pattern
  $213$, namely $425$, $415$, $435$, $213$, and $215$.
  The permutation $42135$ avoids the pattern $132$.
\end{ex}

A permutation $w\in\mf{S}_n$ can also be represented graphically by
plotting the points
\[
G(w) = \bigl\{(i,w(i)) : i \in [1,n]\bigr\}.
\]
Note that $G(w)$ is a subset of the Cartesian product $[1,n]^2 =
[1,n]\times [1,n]$.  Classical pattern containment and avoidance can be
formulated in terms of these graphs.  Namely, $w$ contains a $p$-pattern
if and only if $G(w)$ contains a copy of $G(p)$, as shown in the
following examples.

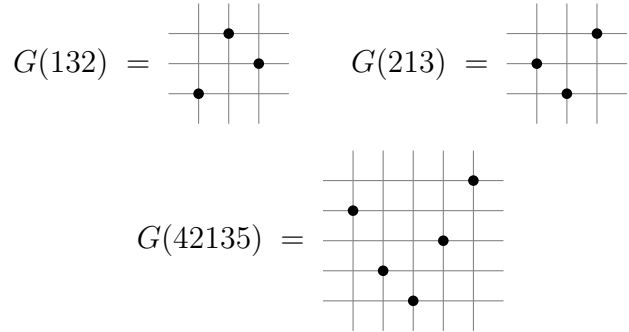
\begin{figure}[htbp]
  \begin{gather*}
  G(132) \;=\;
  \begin{tikzpicture}[scale=.4, baseline={([yshift=-3pt]current bounding box.center)}]
    \foreach \x in {1,2,3} {
      \draw[gray] (0,\x) -- (4,\x);
      \draw[gray] (\x,0) -- (\x,4);
    }
    \foreach \x in {(1,1),(2,3),(3,2)} {\fill[black] \x circle (5pt);}
  \end{tikzpicture}\qquad
  G(213) \;=\;
  \begin{tikzpicture}[scale=.4, baseline={([yshift=-3pt]current bounding box.center)}]
    \foreach \x in {1,2,3} {
      \draw[gray] (0,\x) -- (4,\x);
      \draw[gray] (\x,0) -- (\x,4);
    }
    \foreach \x in {(1,2),(2,1),(3,3)} {\fill[black] \x circle (5pt);}
  \end{tikzpicture}\\[1ex]
  G(42135) \;=\;
  \begin{tikzpicture}[scale=.4, baseline={([yshift=-3pt]current bounding box.center)}]
    \foreach \x in {1,2,3,4,5} {
      \draw[gray] (0,\x) -- (6,\x);
      \draw[gray] (\x,0) -- (\x,6);
    }
    \foreach \x in {(1,4),(2,2),(3,1),(4,3),(5,5)} {\fill[black] \x circle (5pt);}
  \end{tikzpicture}
\end{gather*}
\caption{The permutations $132$, $213$, $42135$ and their graphs.}\label{fig:graph of 42135}
\end{figure}

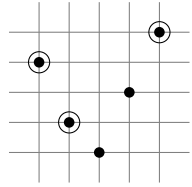
\begin{figure}[htbp]
  \[
  \begin{tikzpicture}[scale=.4, baseline={([yshift=-3pt]current bounding box.center)}]
    \foreach \x in {1,2,3,4,5} {
      \draw[gray] (0,\x) -- (6,\x);
      \draw[gray] (\x,0) -- (\x,6);
    }
    \foreach \x in {(1,4),(2,2),(3,1),(4,3),(5,5)} {\fill[black] \x circle (5pt);}
    \foreach \x in {(1,4),(2,2),(5,5)} {\draw \x circle (10pt);}
  \end{tikzpicture}
  \]
  \caption{A particular occurrence of $213$ in $42135$.}\label{fig:213-in-42135}
\end{figure}

\begin{ex}
  In Figure~\ref{fig:graph of 42135}, we show the graphs $G(132)$,
  $G(213)$, and $G(42135)$. The graph $G(42135)$ contains $G(213)$ in
  five ways.  These correspond to the five occurrences of $213$ in
  $42135$, as discussed in Example~\ref{ex:classical patterns}. The copy
  of $G(213)$ that corresponds to the occurrence $425$ is marked in
  Figure~\ref{fig:213-in-42135}. The graph $G(42135)$ does not
  contain $G(132)$, which corresponds to the fact that $42135$ avoids
  the pattern $132$.
\end{ex}

\begin{defn}\label{defn:mesh-pattern}
  A \emph{mesh pattern} is an ordered pair $(p,R)$ where $p \in
  \mf{S}_k$ is a permutation, and $R$ is a subset of the $(k+1)^2$ unit
  squares in $[0,k+1]^2$.  The set $R$ is the \emph{mesh}, and elements
  (squares) of the mesh are indexed by their lower-left corners; that
  is, $(a,b) \in R$ refers to the square $[a,a+1]\times[b,b+1]$.  The
  mesh pattern $(p,R)$ is depicted graphically by drawing $G(p)$ and
  shading all squares of the mesh $R$.
\end{defn}

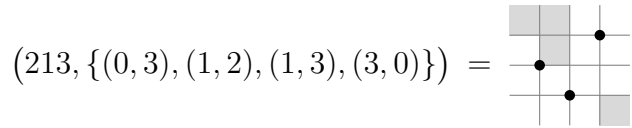
\begin{figure}[htbp]
  \[
  \big(213,\{(0,3),(1,2),(1,3),(3,0)\}\big) \;=\;
  \begin{tikzpicture}[scale=.4, baseline={([yshift=-3pt]current bounding box.center)}]
    \foreach \x in {(0,3),(1,2),(1,3),(3,0)} {\fill[lightgray] \x rectangle ++ (1,1);}
    \foreach \x in {1,2,3} {
      \draw[gray] (0,\x) -- (4,\x);
      \draw[gray] (\x,0) -- (\x,4);
    }
    \foreach \x in {(1,2),(2,1),(3,3)} {\fill[black] \x circle (5pt);}
  \end{tikzpicture}
  \]
  \caption{A mesh pattern.}\label{fig:a-mesh-pattern}
\end{figure}

To illustrate Definition~\ref{defn:mesh-pattern}, we give a
particular mesh pattern in Figure~\ref{fig:a-mesh-pattern}.

For a permutation $w$ to contain a mesh pattern $(p,R)$, this $w$ must contain
an occurrence of the classical pattern $p$ that does not ``interfere'' with the
mesh.  To clarify this interference requirement, we must first explain how the
mesh of $(p,R)$ appears in the graph $G(w)$ relative to a particular occurrence
of $p$.

\begin{defn}\label{defn:corresponding regions}
  Let $w \in \mf{S}_n$ and $p \in \mf{S}_k$.  Suppose that $w$ contains
  a $p$-pattern in positions $i_1 < i_2 < \cdots < i_k$.  By convention,
  we additionally set $i_0 = p(i_0) = 0$ and $i_{k+1} = p(i_{k+1}) =
  n+1$. Let $a,b\in [0,k]$. The square $[a,a+1]\times[b,b+1]$ in $G(p)$
  \emph{corresponds} to the following rectangle in $G(w)$:
  \[
    \Big[\,i_a,\,i_{a+1}\,\Big]\times
    \Big[\,w\big(i_{p^{-1}(b)}\big),\,w\big(i_{p^{-1}(b+1)}\big)\,\Big].
    \]
  \end{defn}

\begin{ex}
  The permutation $w = 42135$ contains a $213$-pattern in positions
  $\{1,3,5\}$. The square $[1,2]\times[3,4] \subset [0,4]^2$ in $G(213)$
  corresponds to the rectangle $[1,4]\times[3,5] \subset [0,6]^2$ in
  $G(w)$.  In Figure~\ref{fig:a-rectangle} this particular $213$-pattern in
  $w$ is marked by circles around the points, and the rectangle
  $[1,4]\times[3,5]$ has been outlined and shaded.

  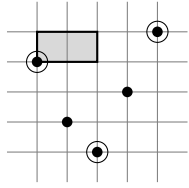
\begin{figure}[htbp]
    \[
    \begin{tikzpicture}[scale=.4]
    \fill[lightgray] (1,4) rectangle (3,5);
    \foreach \x in {1,2,3,4,5} {
      \draw[gray] (0,\x) -- (6,\x);
      \draw[gray] (\x,0) -- (\x,6);
    }
    \foreach \x in {(1,4),(2,2),(3,1),(4,3),(5,5)} {\fill[black] \x circle (5pt);}
    \foreach \x in {(1,4),(3,1),(5,5)}{\draw \x circle (10pt);}
    \draw[thick] (1,4) rectangle (3,5);
  \end{tikzpicture}
  \]
  \caption{The permutation $42135$ with a marked $213$-pattern and the
    rectangle $[1,4]\times[3,5]$ outlined and
    shaded.}\label{fig:a-rectangle}
\end{figure}
\end{ex}

\begin{defn}\label{defn:mesh}
  A permutation $w$ \emph{contains} a mesh pattern $(p,R)$ if $w$
  contains an occurrence of $p$ for which the subregion of $[0,n+1]^2$
  corresponding to the mesh $R$ contains no points of $G(w)$.  If there
  is no such occurrence of $p$ in $w$, then $w$ \emph{avoids} the mesh
  pattern $(p,R)$.
\end{defn}

It is important to observe that the subregion mentioned in
Definition~\ref{defn:mesh} depends on the particular occurrence of $p$
in $w$.

\begin{ex}
  The permutation $42135$ contains five different $213$-patterns. On the other
  hand, it contains the mesh pattern $(213,\{(0,3),(1,2),(1,3),(3,0)\})$ in only
  four ways.  These are depicted in Figure~\ref{fig:213-patterns} where thick
  lines indicate how the four squares of the mesh appear in $G(42135)$.

  \begin{figure}[htbp]
    \begin{equation*}
      \begin{tikzpicture}[scale=0.38]
        \foreach \x in {(0,5),(1,4),(1,5),(5,0),(5,1)}{\fill[lightgray] \x rectangle ++(1,1);}
        \foreach \x in {1,2,3,4,5} {
          \draw[gray] (0,\x) -- (6,\x);
          \draw[gray] (\x,0) -- (\x,6);
        }
        \foreach \x in {(1,4),(2,2),(3,1),(4,3),(5,5)} {\fill[black] \x circle (5pt);}
        \foreach \x in {(1,4),(2,2),(5,5)}{\draw \x circle (10pt);}
        \draw[thick] (0,6) -- ++(2,0) -- ++(0,-2) -- ++(-1,0) -- ++(0,1) -- ++(-1,0) -- cycle;
        \draw[thick] (1,5) -- +(0,1) (1,5) -- +(1,0);
        \draw[thick] (5,0) -- ++(0,2) -- ++(1,0) -- ++(0,-2) -- cycle;
      \end{tikzpicture}
      \qquad\;
      \begin{tikzpicture}[scale=0.38]
        \fill[lightgray] (1,4) rectangle (3,5);
        \fill[lightgray] (0,5) rectangle (3,6);
        \fill[lightgray] (5,0) rectangle (6,1);
        \foreach \x in {1,2,3,4,5} {
          \draw[gray] (0,\x) -- (6,\x);
          \draw[gray] (\x,0) -- (\x,6);
        }
        \foreach \x in {(1,4),(2,2),(3,1),(4,3),(5,5)} {\fill[black] \x circle (5pt);}
        \foreach \x in {(1,4),(3,1),(5,5)}{\draw \x circle (10pt);}
        \draw[thick] (0,5) -- (0,6) -- (3,6) -- (3,4) -- (1,4) -- (1,5) -- cycle;
        \draw[thick] (1,5) -- (3,5) (1,5) -- (1,6);
        \draw[thick] (5,0) rectangle (6,1);
      \end{tikzpicture}
      \qquad\;
      \begin{tikzpicture}[scale=0.38]
        \fill[lightgray] (1,4) rectangle (4,5);
        \fill[lightgray] (0,5) rectangle (4,6);
        \fill[lightgray] (5,0) rectangle (6,3);
        \foreach \x in {1,2,3,4,5} {
          \draw[gray] (0,\x) -- (6,\x);
          \draw[gray] (\x,0) -- (\x,6);
        }
        \foreach \x in {(1,4),(2,2),(3,1),(4,3),(5,5)} {\fill[black] \x circle (5pt);}
        \foreach \x in {(1,4),(4,3),(5,5)}{\draw \x circle (10pt);}
        \draw[thick] (0,5) -- (0,6) -- (4,6) -- (4,4) -- (1,4) -- (1,5) -- cycle;
        \draw[thick] (1,5) -- (1,6) (1,5) -- (4,5);
        \draw[thick] (5,0) rectangle (6,3);
      \end{tikzpicture}
      \qquad\;
      \begin{tikzpicture}[scale=0.38]
        \fill[lightgray] (0,5) rectangle (2,6);
        \fill[lightgray] (2,2) rectangle (3,6);
        \fill[lightgray] (5,0) rectangle (6,1);
        \foreach \x in {1,2,3,4,5} {
          \draw[gray] (0,\x) -- (6,\x);
          \draw[gray] (\x,0) -- (\x,6);
        }
        \foreach \x in {(1,4),(2,2),(3,1),(4,3),(5,5)} {\fill[black] \x circle (5pt);}
        \foreach \x in {(2,2), (3,1), (5,5)}{\draw \x circle (10pt);}
        \draw[thick] (0,5) rectangle (3,6);
        \draw[thick] (2,2) rectangle (3,6);
        \draw[thick] (5,0) rectangle (6,1);
      \end{tikzpicture}
    \end{equation*}
    \caption{The 4 occurrences, in the permutation $42135$, of the mesh
      pattern
      $(213,\{(0,3),(1,2),(1,3),(3,0)\})$.}\label{fig:213-patterns}
  \end{figure}
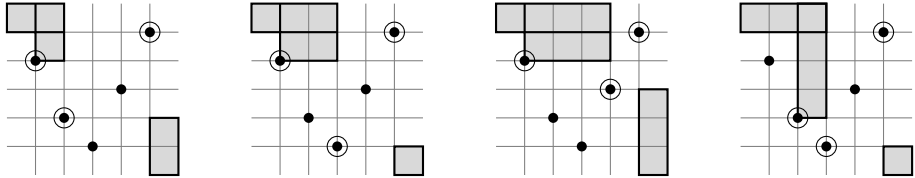
  The remaining occurrence of $213$ in $42135$ does not obey the
  restrictions of the mesh because the shaded region includes the point
  $(1,4) \in G(42135)$ as shown in Figure~\ref{fig:not-213-patterns}.

  \begin{figure}[htbp]
    $$
    \begin{tikzpicture}[scale=0.38]
      \fill[lightgray] (0,3) rectangle (2,6);
      \fill[lightgray] (2,2) rectangle (3,6);
      \fill[lightgray] (4,0) rectangle (6,1);
      \foreach \x in {1,2,3,4,5} {
        \draw[gray] (0,\x) -- (6,\x);
        \draw[gray] (\x,0) -- (\x,6);
      }
      \foreach \x in {(1,4),(2,2),(3,1),(4,3),(5,5)} {
        \fill[black] \x circle (5pt);
      }
      \foreach \x in {(2,2), (3,1), (4,3)}{
        \draw \x circle (10pt);
      }
      \draw[thick] (0,3) rectangle (3,6);
      \draw[thick] (2,2) rectangle (3,6);
      \draw[thick] (4,0) rectangle (6,1);
    \end{tikzpicture}
    $$
    \caption{A $213$-pattern in the permutation $42135$ that is not an
      occurrence of the mesh pattern
      $(213,\{(0,3),(1,2),(1,3),(3,0)\})$.
    }\label{fig:not-213-patterns}
  \end{figure}
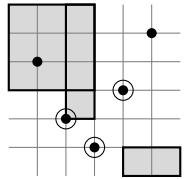
\end{ex}

\begin{defn}\label{defn:av and cont}
  For a pattern $\pi$ of any flavor, let $\av(\pi)$ be the set of
  permutations that avoid $\pi$ and let $\cont(\pi)$ be the set of
  permutations that contain $\pi$.  Similarly, if $\Pi$ is a collection
  of patterns, then $\av(\Pi) = \bigcap_{\pi \in \Pi} \av(\pi)$ and
  $\cont(\Pi) = \bigcup_{\pi \in \Pi} \cont(\pi) =
  \big(\bigcup_n\mf{S}_n\big) \setminus \av(\Pi)$.
\end{defn}

The purpose of this article is to examine when the sets of
Definition~\ref{defn:av and cont} coincide for two different mesh
patterns.  This is captured by the following definition.

\begin{defn}
  Two patterns $\pi$ and $\sigma$ are \emph{coincident} if $\av(\pi) =
  \av(\sigma)$, or, equivalently, if $\cont(\pi) = \cont(\sigma)$. This
  coincidence will be denoted $\pi \asymp \sigma$.
\end{defn}

\section{Mesh patterns can be coincident --- examples and previous results}\label{sec:previous-results}

Mesh pattern coincidence is not a rare phenomenon, as demonstrated in Figure~\ref{fig:mesh coincidence with classical}.

\begin{figure}[htbp]
  $$
  \begin{tikzpicture}[scale=.4, baseline={([yshift=-3pt]current bounding box.center)}]
    \foreach \x in {(1,2),(2,0)} {
      \fill[lightgray] \x rectangle ++ (1,1);
    }
    \foreach \x in {1,2,3} {
      \draw[gray] (0,\x) -- (4,\x);
      \draw[gray] (\x,0) -- (\x,4);
    }
    \foreach \x in {(1,2),(2,1),(3,3)} {
      \fill[black] \x circle (5pt);
    }
  \end{tikzpicture}
  \;\asymp\;
  \begin{tikzpicture}[scale=.4, baseline={([yshift=-3pt]current bounding box.center)}]
    \foreach \x in {(1,2)} {\fill[lightgray] \x rectangle ++ (1,1);}
    \foreach \x in {1,2,3} {
      \draw[gray] (0,\x) -- (4,\x);
      \draw[gray] (\x,0) -- (\x,4);
    }
    \foreach \x in {(1,2),(2,1),(3,3)} {\fill[black] \x circle (5pt);}
  \end{tikzpicture}
  \;\asymp\;
  \begin{tikzpicture}[scale=.4, baseline={([yshift=-3pt]current bounding box.center)}]
    \foreach \x in {(2,0)} {\fill[lightgray] \x rectangle ++ (1,1);}
    \foreach \x in {1,2,3} {
      \draw[gray] (0,\x) -- (4,\x);
      \draw[gray] (\x,0) -- (\x,4);
    }
    \foreach \x in {(1,2),(2,1),(3,3)} {\fill[black] \x circle (5pt);}
  \end{tikzpicture}
  \;\asymp\;
  \begin{tikzpicture}[scale=.4, baseline={([yshift=-3pt]current bounding box.center)}]
    \foreach \x in {1,2,3} {
      \draw[gray] (0,\x) -- (4,\x);
      \draw[gray] (\x,0) -- (\x,4);
    }
    \foreach \x in {(1,2),(2,1),(3,3)} {\fill[black] \x circle (5pt);}
  \end{tikzpicture}
  $$
  \caption{Mesh patterns coincident with a classical pattern.}
  \label{fig:mesh coincidence with classical}
\end{figure}
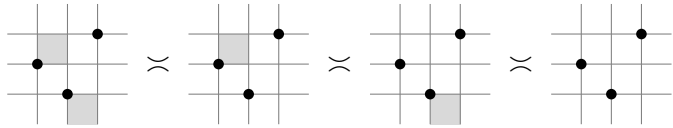

The property underlying this example is the main result of
\cite{tenner-mesh-classical}, which we will review in this section. This
requires the notion of an enclosed diagonal.

\begin{defn}\label{defn:enclosed}
  Let $(p,R)$ be a mesh pattern and let $a$, $b$, and $c$ be nonnegative
  integers. Then $D = \big\{(a+i,b+i) : i \in [0,c]\big\}\subseteq R$
  is an \emph{enclosed NE-diagonal} if
  \begin{align*}
    \big\{(a+i,b+i) : i\in [0,c+1]\big\} \cap G(p) &=
    \big\{(a+i,b+i) : i\in [1,c]\big\} \\
    \intertext{Similarly, $D = \big\{(a+i,b-i) : i \in [0,c]\big\}\subseteq R$
      is an \emph{enclosed SE-diagonal} if}
    \big\{(a+i,b+1-i) : i\in [0,c+1]\big\} \cap G(p) &=
    \big\{(a+i,b+1-i) : i\in [1,c]\big\}.
  \end{align*}
  If $D$ has either of these types, then $D$ can be called, simply, an
  \emph{enclosed diagonal}. The \emph{length} of an enclosed diagonal is
  $c+1$, and enclosed diagonals of length greater than one are
  \emph{proper}.  An enclosed diagonal of length $1$ is both an enclosed
  NE-diagonal and an enclosed SE-diagonal, and such a square will be
  called \emph{pointless} because it intersects no elements of the graph $G(p)$.
\end{defn}

Definition~\ref{defn:enclosed} is illustrated in Figure~\ref{fig:enclosed}. We
note that our definition of enclosed notation differs from that of
\cite{tenner-mesh-classical} because in the present setting, it is more natural
to associate an enclosed diagonal with the shaded squares of the mesh.

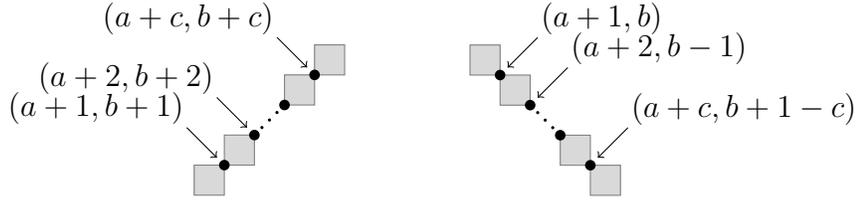
\begin{figure}[H]
  $$
  \begin{tikzpicture}[scale=.4]
    \foreach \x in {0,1,3,4} {
      \fill[lightgray] (\x,\x) rectangle ++(1,1);
      \draw[gray] (\x,\x) rectangle ++(1,1);
    } \foreach \x in {1,2,3,4} {
      \fill[black] (\x,\x) circle (5pt);
    }
    \foreach \x in {2.25,2.5,2.75} {
      \fill[black] (\x,\x) circle (1.5pt);
    }
    \draw (0,2) node[above left] {$(a+1,b+1)$};
    \draw[->] (-.25,2.25) -- (.75,1.25);
    \draw (1,3) node[above left] {$(a+2,b+2)$};
    \draw[->] (.75,3.25) -- (1.75,2.25);
    \draw (3,5) node[above left] {$(a+c,b+c)$};
    \draw[->] (2.75,5.25) -- (3.75,4.25);
  \end{tikzpicture}
  \qquad\qquad
  \begin{tikzpicture}[scale=.4]
    \foreach \x in {0,1,3,4} {
      \fill[lightgray] (\x,-\x) rectangle ++(1,-1);
      \draw[gray] (\x,-\x) rectangle ++(1,-1);}
    \foreach \x in {1,2,3,4} {\fill[black] (\x,-\x) circle (5pt);}
    \foreach \x in {2.25,2.5,2.75} {\fill[black] (\x,-\x) circle (1.5pt);}
    \draw (5,-3) node[above right] {$(a+c,b+1-c)$}; \draw[->] (5.25,-2.75) -- (4.25,-3.75);
    \draw (3,-1) node[above right] {$(a+2,b-1)$}; \draw[->] (3.25,-.75) -- (2.25,-1.75);
    \draw (2,0) node[above right] {$(a+1,b)$}; \draw[->] (2.25,.25) -- (1.25,-.75);
  \end{tikzpicture}
  $$
  \caption{Proper enclosed diagonals in a mesh pattern $(p,R)$.  A point
    is in $G(p)$ if and only if it is marked
    $\bullet$.}\label{fig:enclosed}
\end{figure}

\begin{ex}\label{ex:enclosed diagonal}
  The mesh pattern $\big(231,\{(1,1),(2,0),(3,1)\}\big)$ depicted below
  $$
  \begin{tikzpicture}[scale=.4, baseline={([yshift=-2.5pt]current bounding box.center)}]
    \foreach \x in {(1,1),(2,0),(3,1)} {
      \fill[lightgray] \x rectangle ++(1,1);
    }
    \foreach \x in {1,2,3} {
      \draw[gray] (\x,0) -- (\x,4);
      \draw[gray] (0,\x) -- (4,\x);
    }
    \foreach \x in {(1,2),(2,3),(3,1)} {
      \fill[black] \x circle (5pt);
    }
  \end{tikzpicture}
  $$
  has one enclosed diagonal, namely $\{(2,0), (3,1)\} \subset R$. This is an
  enclosed NE-diagonal and has length $2$.
\end{ex}

The next lemma follows immediately from the definition of enclosed
diagonals and shows that these configurations in a mesh pattern are, in
a sense, stable.

\begin{lem}\label{lem:enclosed diagonals are stable}
  Let $(p,R)$ be a mesh pattern.
  \begin{enumerate}[(a)]
  \item If $R' \subseteq R$, then every enclosed diagonal in the mesh
    pattern $(p,R')$ is an enclosed diagonal in $(p,R)$.
  \item Each square in $R$ belongs to at most one enclosed diagonal.
  \end{enumerate}
\end{lem}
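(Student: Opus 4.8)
The plan is to read both parts directly off Definition~\ref{defn:enclosed}. For part~(a): a set $D$ of squares is an enclosed NE-diagonal (respectively SE-diagonal) of a mesh pattern $(p,S)$ exactly when $D$ has the prescribed shape $\{(a+i,b+i):i\in[0,c]\}$ (respectively $\{(a+i,b-i):i\in[0,c]\}$), $D\subseteq S$, and the displayed identity relating $G(p)$ to the diagonal holds. That identity refers only to $p$ and the integers $a,b,c$; the mesh enters solely through the containment $D\subseteq S$. Hence if $D$ is an enclosed diagonal of $(p,R')$ and $R'\subseteq R$, then $D\subseteq R'\subseteq R$ while the identity is untouched, so $D$ is an enclosed diagonal of $(p,R)$. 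This is all of~(a).

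For part~(b), fix a square $s=(x,y)\in R$ lying in an enclosed diagonal, and record two observations. First, since $p$ is a permutation, no two corners of $s$ in a common row or column can both lie in $G(p)$; as the four corners of $s$ are the two endpoints $(x,y)$, $(x+1,y+1)$ of its NE-diagonal together with the two endpoints $(x,y+1)$, $(x+1,y)$ of its SE-diagonal, it follows that $G(p)$ meets at most one of these two endpoint pairs. Second, unwinding the identity in Definition~\ref{defn:enclosed}: every square of a proper enclosed NE-diagonal has at least one endpoint of its NE-diagonal in $G(p)$ (the interior lattice points of the diagonal all lie in $G(p)$, and every square has one of them as a corner once the length is at least $2$); symmetrically every square of a proper enclosed SE-diagonal has an endpoint of its SE-diagonal in $G(p)$; and a length-one (pointless) enclosed diagonal $\{s\}$ has all four corners of $s$ outside $G(p)$.

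With these in hand I would prove uniqueness in two steps. If $D$ and $D'$ are proper enclosed NE-diagonals each containing $s$, then both lie along the lattice line of slope $1$ through $(x,y)$, and the interior lattice points of $D$, and likewise those of $D'$, form a nonempty maximal run of consecutive points of $G(p)$ along that line; conversely such a run is the interior of a unique NE-diagonal. By the second observation $s$ has, among its corners, an interior lattice point of $D$ and one of $D'$; since $(x,y)$ and $(x+1,y+1)$ are adjacent on that line, any interior points occurring as corners of $s$ lie in one and the same maximal run of $G(p)$ along it, so $D$ and $D'$ have the same interior run and hence $D=D'$. The same argument handles proper SE-diagonals, and two pointless enclosed diagonals at $s$ each equal $\{s\}$. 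Lastly, $s$ cannot lie in enclosed diagonals of two different kinds: by the second observation that would force a corner from each of the two endpoint pairs of $s$ into $G(p)$---impossible by the first observation, as such corners share a row or a column---or it would force some corner of $s$ into $G(p)$ while simultaneously requiring all four to avoid it. Therefore $s$ lies in at most one enclosed diagonal.

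The one step needing genuine care is the maximal-run argument: one must verify that every square of a proper enclosed diagonal has an interior lattice point of that diagonal among its corners---this is precisely where properness (length at least $2$) enters---and that two interior lattice points both occurring as corners of the common square $s$ force the two runs to coincide. Everything else is bookkeeping with Definition~\ref{defn:enclosed} and the injectivity of $p$.
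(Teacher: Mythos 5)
Your proof is correct. The paper offers no argument for this lemma at all---it is stated as following ``immediately from the definition of enclosed diagonals''---so there is no competing approach to compare against; your write-up is a valid and more careful elaboration of exactly the facts the authors take for granted (part (a) because the mesh enters the definition only through the containment $D\subseteq S$, and part (b) via the injectivity of $p$ and the maximality of the run of interior lattice points). The one point worth flagging is that part (b) genuinely requires reading the length-one case of Definition~\ref{defn:enclosed} as you do, namely that a length-one enclosed diagonal is a \emph{pointless} square with all four corners outside $G(p)$ (i.e.\ satisfying both the NE and SE conditions with $c=0$); under the weaker reading where only one of the two $c=0$ conditions is imposed, a square could lie in a length-one NE-diagonal and simultaneously in a proper SE-diagonal, so your choice of interpretation is the right one and is where the statement could otherwise fail.
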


It will be useful to look at the collection of enclosed diagonals in a
given mesh pattern $(p,R)$, and we will denote this set
$$\enc{(p,R)}.$$

Previous work by the second author characterized when an entire mesh is
superfluous; that is, when a mesh pattern is equivalent to its
underlying classical pattern. The coincidences in
Figure~\ref{fig:mesh coincidence with classical}, for example, are explained by this
result.

\begin{thm}[{\cite[Theorem 3.5]{tenner-mesh-classical}}]\label{thm:classical}
  A mesh pattern $(p,R)$ is coincident to a classical pattern if and
  only if $\enc{(p,R)} = \emptyset$.  If this is the case, then $(p,R)
  \asymp p$.
\end{thm}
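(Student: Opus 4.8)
The plan is to prove the two directions of the biconditional separately, and then observe that the final sentence follows from the forward implication together with Lemma~\ref{lem:enclosed diagonals are stable}(a). Throughout, write $(p,R)$ for the mesh pattern with $p\in\mf{S}_k$, and recall that $(p,\emptyset)$ is coincident to the classical pattern $p$ by definition (a permutation contains the mesh pattern with empty mesh exactly when it contains $p$). So the statement ``$(p,R)$ is coincident to a classical pattern'' really means ``$(p,R)\asymp p$,'' since the only classical pattern whose occurrences are a subset of the occurrences of $p$ (and conversely) is $p$ itself — any coincidence with a classical pattern $q$ forces $\av(q)=\av(p)$, and since classical patterns are determined by their avoidance sets, $q=p$.

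\emph{If $\enc{(p,R)}=\emptyset$, then $(p,R)\asymp p$.} Since $\cont(p,R)\subseteq\cont(p)$ always holds, it suffices to show the reverse: every occurrence of the classical pattern $p$ in a permutation $w$ can be ``repaired'' into an occurrence of the mesh pattern $(p,R)$. The idea is to process the shaded squares of $R$ one at a time, each time sliding the relevant indices of the occurrence to squeeze offending points of $G(w)$ out of the corresponding region, while not creating new violations. Concretely, suppose a square $(a,b)\in R$ has its corresponding rectangle in $G(w)$ containing a point of $G(w)$; the absence of an enclosed diagonal through $(a,b)$ means that at least one of the four ``adjacent'' squares along the two diagonal directions is \emph{not} in $R$ and the corresponding boundary of the rectangle is ``free'' — allowing us to absorb the offending region into a neighboring, unshaded square by replacing one index $i_j$ of the occurrence with a neighboring value. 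One must argue that this local move terminates (e.g.\ by a monotonicity/measure argument on the total number of $G(w)$-points lying in shaded regions, or by an induction on $n-k$) and that it never reintroduces a point into a square that had been cleared. The bookkeeping here — specifying exactly which index to move and verifying the move is legal — is the main obstacle, and is essentially the content of \cite[Theorem 3.5]{tenner-mesh-classical}; I would set it up as an induction on the number of points of $w$ not participating in the chosen occurrence.

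\emph{If $\enc{(p,R)}\neq\emptyset$, then $(p,R)\not\asymp p$.} Here I would exhibit, for any mesh pattern with a nonempty enclosed diagonal $D$, an explicit permutation $w$ that contains $p$ but avoids $(p,R)$. Take a proper enclosed diagonal $D=\{(a+i,b+i):i\in[0,c]\}$ (the pointless, length-one case is handled similarly, and in fact more easily, by inserting a single point into the lone enclosed square). Build $w$ from $p$ by inserting, into every occurrence-copy of $p$ simultaneously, a new point placed in the ``middle'' of the enclosed diagonal — i.e.\ at a position and value strictly between the two endpoints of the diagonal chain of points of $G(p)$ that $D$ encloses. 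Because the diagonal is \emph{enclosed}, the new point of $G(w)$ lands inside one of the shaded squares of $D$ no matter which copy of $p$ one looks at; and a short case analysis (using that $D$ is a maximal diagonal run of shaded squares adjacent only to the enclosed points of $p$) shows that \emph{every} occurrence of $p$ in this enlarged $w$ has a point of $G(w)$ inside some square of $R$. Hence $w\in\av(p,R)\setminus\av(p)$, so the two are not coincident. The delicate point is checking that the inserted point cannot be ``avoided'' by choosing a cleverer occurrence of $p$ — this is exactly where the maximality condition built into Definition~\ref{defn:enclosed} (the diagonal meets $G(p)$ only in its interior lattice points, not at the two ends) is used.

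Finally, the last sentence of the theorem is immediate: if $(p,R)$ is coincident to a classical pattern then, as noted above, that pattern must be $p$ itself, so $(p,R)\asymp p$.
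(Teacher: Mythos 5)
First, note that the paper does not actually prove this theorem: it is quoted from \cite{tenner-mesh-classical}, so there is no internal proof to compare yours against. Judged on its own terms, your proposal is half right. The direction ``$\enc{(p,R)}\neq\emptyset$ implies $(p,R)\not\asymp p$'' is essentially correct, and your construction --- insert one new point into the middle of an enclosed diagonal of $p$ and check that every occurrence of $p$ in the enlarged permutation leaves a point inside a shaded square of $D$ --- is exactly the construction this paper uses to prove Lemma~\ref{lem:same enc diag}; indeed that lemma with $R'=\emptyset$ (for which $\enc{(p,\emptyset)}=\emptyset$) already yields this direction, so you could simply have invoked it. Your reduction of ``coincident to a classical pattern'' to ``coincident to $p$ itself'' is likewise fine (it is Lemma~\ref{lem:p=p'} with $R'=\emptyset$).

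The forward direction, however, contains a genuine gap, and it is the hard direction. You describe a repair procedure but explicitly defer both its termination and the fact that clearing one square never re-populates a square already cleared; that deferred ``bookkeeping'' is the entire content of the theorem (compare how much effort the paper spends on precisely this kind of termination argument in the proof of Lemma~\ref{lem:simult-shading}). Worse, the local criterion you extract from $\enc{(p,R)}=\emptyset$ is not what the hypothesis gives you: a shaded square $(a,b)$ lying on no enclosed diagonal can have all of its diagonally adjacent squares shaded, with the failure to be enclosed occurring only at the far end of a long diagonal chain of shaded squares linked through points of $G(p)$. What the hypothesis actually guarantees is that every maximal such chain terminates at a corner that \emph{is} in $G(p)$ with the continuing square unshaded (or, in the length-one case, that the square meets $G(p)$), and the repair must therefore move the occurrence point sitting at that possibly distant end of the chain, not one adjacent to $(a,b)$. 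Without identifying this global escape structure and exhibiting a monotone quantity certifying termination, the sketch does not amount to a proof.
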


Theorem~\ref{thm:classical} suggests that enclosed diagonals are the
crux to understanding coincidence between mesh patterns, and that
perhaps mesh patterns are coincident if and only if they have the same
enclosed diagonals.  Unfortunately, as the following example shows, this
is not the case.

\begin{ex} \label{ex:surprising non-coincidence}
  The coincidences
  $$
  \begin{tikzpicture}[scale=.4, baseline={([yshift=-2.5pt]current bounding box.center)}]
    \foreach \x in {(1,0),(3,1),(3,2)} {
      \fill[lightgray] \x rectangle ++ (1,1);
    }
    \foreach \x in {1,2,3} {
      \draw[gray] (0,\x) -- (4,\x);
      \draw[gray] (\x,0) -- (\x,4);
    }
    \foreach \x in {(1,2),(2,3),(3,1)} {
      \fill[black] \x circle (5pt);
    }
  \end{tikzpicture}
  \;\asymp\;
  \begin{tikzpicture}[scale=.4, baseline={([yshift=-2.5pt]current bounding box.center)}]
    \foreach \x in {(1,0),(1,1),(3,1),(3,2)} {\fill[lightgray] \x rectangle ++ (1,1);}
    \foreach \x in {1,2,3} {
      \draw[gray] (0,\x) -- (4,\x);
      \draw[gray] (\x,0) -- (\x,4);
    }
    \foreach \x in {(1,2),(2,3),(3,1)} {\fill[black] \x circle (5pt);}
  \end{tikzpicture}
  \;\asymp\;
  \begin{tikzpicture}[scale=.4, baseline={([yshift=-2.5pt]current bounding box.center)}]
    \foreach \x in {(1,0),(1,1),(3,2)} {\fill[lightgray] \x rectangle ++ (1,1);}
    \foreach \x in {1,2,3} {
      \draw[gray] (0,\x) -- (4,\x);
      \draw[gray] (\x,0) -- (\x,4);
    }
    \foreach \x in {(1,2),(2,3),(3,1)} {\fill[black] \x circle (5pt);}
  \end{tikzpicture}
  $$
  follow from the Shading Lemma (\cite[Lemma 3.11]{wilfshort}),
  reviewed below.  Note, however, that these three mesh patterns in are
  not coincident to
  $$
  \begin{tikzpicture}[scale=.4, baseline={([yshift=-2.5pt]current bounding box.center)}]
    \foreach \x in {(1,0),(3,2)} {
      \fill[lightgray] \x rectangle ++ (1,1);
    }
    \foreach \x in {1,2,3} {
      \draw[gray] (0,\x) -- (4,\x);
      \draw[gray] (\x,0) -- (\x,4);
    }
    \foreach \x in {(1,2),(2,3),(3,1)} {
      \fill[black] \x circle (5pt);
    }
  \end{tikzpicture},
  $$
  the mesh pattern containing only the enclosed diagonals. This is because $42513$ avoids the first three patterns but contains
  this last pattern, even though the enclosed diagonals are $\{(1,0)\}$
  and $\{(3,2)\}$ in each case.
\end{ex}

The third author and Hilmarsson, J\'onsd\'ottir, Sigur\dh ard\'ottir,
and Vi\dh arsd\'ottir have given sufficient conditions for when $(p,R)
\asymp (p, R')$ where $R$ and $R'$ differ by only one square.

\begin{lem}[{Shading Lemma (northeast) \cite[Lemma 3.11]{wilfshort}}]\label{lem:shading}
  Let $(p,R)$ be a mesh pattern. Suppose that the following conditions all hold.
  \begin{itemize}
  \item Neither the square $\boks{i}{p(i)}$ nor the square $\boks{i-1}{p(i)-1}$
  is in $R$.
  \item At most one of the squares $\boks{i}{p(i)-1}$ and $\boks{i-1}{p(i)}$
  is in $R$.
  \item For all $x \not\in \{i-1,i\}$, if $\boks{x}{p(i)-1} \in R$ then
  $\boks{x}{p(i)} \in R$.
  \item For all $y \not\in \{p(i)-1, p(i)\}$, if $\boks{i-1}{y} \in R$ then
  $\boks{i}{y} \in R$.
  \end{itemize}
  Then $(p,R) \asymp (p,R\cup\{\boks{i}{p(i)} \})$.
\end{lem}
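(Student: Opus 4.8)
\emph{Proof strategy.}
The plan is to prove the two inclusions $\cont(p,R\cup\{\boks{i}{p(i)}\})\subseteq\cont(p,R)$ and $\cont(p,R)\subseteq\cont(p,R\cup\{\boks{i}{p(i)}\})$ separately. The first is trivial: since $R\subseteq R\cup\{\boks{i}{p(i)}\}$, any occurrence of $p$ in a permutation that respects the larger mesh respects the smaller one. (If $\boks{i}{p(i)}$ were already in $R$ the whole statement would be vacuous, which is the point of the clause $\boks{i}{p(i)}\notin R$ in the first bullet.) For the reverse inclusion I would start from a permutation $w$ containing $(p,R)$, fix an occurrence of $p$ in $w$ at positions $i_1<\dots<i_k$ whose corresponding region avoids $R$, and let $P=\bigl(i_i,w(i_i)\bigr)$ be the occurrence point sitting at the lower-left corner of the rectangle $B$ corresponding to the square $\boks{i}{p(i)}$. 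If the interior of $B$ meets no point of $G(w)$, this very occurrence witnesses $(p,R\cup\{\boks{i}{p(i)}\})$ and we are done; so assume $B\cap G(w)\neq\varnothing$.

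The key move is to \emph{slide $P$} to a carefully chosen point $q=(x_0,y_0)$ of $G(w)$ in the interior of $B$, leaving the other $k-1$ occurrence points fixed. Because $q\in B$ we have $i_i<x_0<i_{i+1}$ and $w(i_i)<y_0<y^{*}$, where $y^{*}$ is the least occurrence value exceeding $w(i_i)$; a short check then shows that replacing $P$ by $q$ yields a $k$-tuple still order isomorphic to $p$, i.e.\ a new occurrence $O'$. Which $q$ to take is dictated by the hypothesis: the second bullet guarantees that at most one of $\boks{i-1}{p(i)}$ and $\boks{i}{p(i)-1}$ lies in $R$, and I would take $q$ to be the point of $G(w)\cap B$ of greatest first coordinate if $\boks{i}{p(i)-1}\in R$, and the point of greatest second coordinate otherwise (each such extremal point being unique because $w$ is a permutation).

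It then remains to verify that $O'$ respects $R\cup\{\boks{i}{p(i)}\}$. Sliding $P$ up and to the right changes the rectangle corresponding to a square $\boks{a}{b}$ only when $a\in\{i-1,i\}$ or $b\in\{p(i)-1,p(i)\}$, so I would run through just those squares. For $\boks{i}{b}$ with $b\notin\{p(i)-1,p(i)\}$ and for $\boks{a}{p(i)}$ with $a\notin\{i-1,i\}$ the new rectangle is contained in the old one, so emptiness is inherited. For $\boks{i-1}{b}$ with $b\notin\{p(i)-1,p(i)\}$ the new rectangle acquires a vertical strip contained in the old rectangle of $\boks{i}{b}$, which is shaded by the fourth bullet and hence empty; symmetrically, for $\boks{a}{p(i)-1}$ with $a\notin\{i-1,i\}$ the extra horizontal strip lies in the old rectangle of $\boks{a}{p(i)}$, shaded by the third bullet. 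The square $\boks{i-1}{p(i)-1}$ is kept out of the mesh by the first bullet, so it needs no attention. Finally, the three squares $\boks{i-1}{p(i)}$, $\boks{i}{p(i)-1}$ and the new square $\boks{i}{p(i)}$ correspond under $O'$ to the three quadrants of $B$ lying respectively above-left of $q$, below-right of $q$, and above-right of $q$, and the extremal choice of $q$ is exactly what empties those among them whose squares are shaded.

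I expect this last step to be the main obstacle. The delicate part is bookkeeping which of the four rectangles abutting $P$ grow and which shrink under the slide, and then seeing that a \emph{single} extremal choice of $q$ can simultaneously empty every required quadrant of $B$; this works precisely because the second bullet forbids $\boks{i-1}{p(i)}$ and $\boks{i}{p(i)-1}$ from both being shaded, so $q$ is never asked to be both the highest and the rightmost point of $B$. A recurring minor point is the convention that a mesh region is violated only by points of $G(w)$ in its interior, so that occurrence points, which always lie on the boundaries of the surrounding rectangles, are harmless.
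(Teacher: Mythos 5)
Your argument is correct, and it is essentially the standard one: the paper itself does not prove this lemma (it is quoted from \cite{wilfshort}), but your point-sliding argument --- replace the occurrence point by the extremal point of $G(w)$ in the box $B$, with the direction of extremality dictated by which of $\boks{i-1}{p(i)}$ and $\boks{i}{p(i)-1}$ is shaded, and use the third and fourth bullets to absorb the growth of the neighbouring rectangles --- is precisely the technique behind the original proof and behind this paper's own proof of the Simultaneous Shading Lemma (Lemma~\ref{lem:simult-shading}). Your bookkeeping of which rectangles shrink and which grow, and your observation that the second bullet is exactly what lets a single extremal choice of $q$ suffice, are both accurate; the only (harmless) omission is the boundary convention $i_{k+1}=p(i_{k+1})=n+1$ needed when $i=k$ or $p(i)=k$.
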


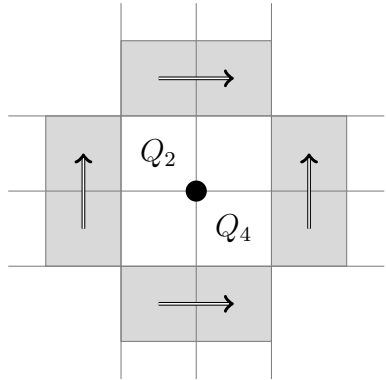
\begin{figure}[htbp]
  $$
  \begin{tikzpicture}[scale=1]
    % shading
    \filldraw[fill=lightgray,draw=gray] (1,0) rectangle (3,1);
    \filldraw[fill=lightgray,draw=gray] (0,1) rectangle (1,3);
    \filldraw[fill=lightgray,draw=gray] (1,3) rectangle (3,4);
    \filldraw[fill=lightgray,draw=gray] (3,1) rectangle (4,3);
    % grid
    \foreach \x in {1,2,3} {
      \draw[gray] (\x,-0.5) -- (\x,4.5);
      \draw[gray] (-0.5,\x) -- (4.5,\x);
    }
    % arrows
    \draw[->, double] (1.5,0.5) -- (2.5,0.5);
    \draw[->, double] (0.5,1.5) -- (0.5,2.5);
    \draw[->, double] (1.5,3.5) -- (2.5,3.5);
    \draw[->, double] (3.5,1.5) -- (3.5,2.5);
    \fill[black] (2,2) circle (4pt);
    \draw (1.5, 2.5) node {$Q_2$};
    \draw (2.5, 1.5) node {$Q_4$};
  \end{tikzpicture}
  $$
  \caption{These are the conditions required to apply
    Lemma~\ref{lem:shading}, where at most one of $Q_2$ or $Q_4$ is
    shaded.}\label{fig:shadinglemma}
\end{figure}

Note that there are northwest, southeast, and southwest analogues for
Lemma~\ref{lem:shading}, which determine whether the other three
squares incident to an element of $G(p)$ can be added to $R$ in such a
way so as to yield a coincident mesh pattern.

Example~\ref{ex:surprising non-coincidence} yields the following somewhat
surprising result.

\begin{cor}\label{cor:no minimal mesh}
  Fix a mesh pattern $(p,R)$.  The set $\{R' : (p,R) \asymp (p,R')\}$
  need not have a unique smallest element when ordered by set
  containment of the meshes.
\end{cor}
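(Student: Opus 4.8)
The plan is to read the corollary off directly from Example~\ref{ex:surprising non-coincidence}. Set $p = 231$ and abbreviate the four meshes appearing there as
\[
  R_1 = \{\boks{1}{0},\boks{3}{1},\boks{3}{2}\},\qquad
  R_2 = \{\boks{1}{0},\boks{1}{1},\boks{3}{1},\boks{3}{2}\},\qquad
  R_3 = \{\boks{1}{0},\boks{1}{1},\boks{3}{2}\},\qquad
  R_0 = \{\boks{1}{0},\boks{3}{2}\},
\]
so that Example~\ref{ex:surprising non-coincidence} records $(p,R_1) \asymp (p,R_2) \asymp (p,R_3)$ while $(p,R_0) \not\asymp (p,R_1)$. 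I would take the fixed mesh pattern of the statement to be $(p,R_1)$ and consider $\mathcal{S} = \{R' : (p,R_1) \asymp (p,R')\}$; by construction $R_1, R_2, R_3 \in \mathcal{S}$.

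The main step is a short squeezing argument resting on the elementary monotonicity fact that $R' \subseteq R''$ implies $\av\bigl((p,R')\bigr) \subseteq \av\bigl((p,R'')\bigr)$ (enlarging a mesh enlarges the region required to be empty, hence can only create more avoiders). Suppose, toward a contradiction, that $\mathcal{S}$ has a smallest element $R^{\ast}$ under set containment. Then $R^{\ast} \subseteq R_1$ and $R^{\ast} \subseteq R_3$, so $R^{\ast} \subseteq R_1 \cap R_3 = R_0$. Since also $R_0 \subseteq R_1$, monotonicity gives
\[
  \av\bigl((p,R^{\ast})\bigr) \;\subseteq\; \av\bigl((p,R_0)\bigr) \;\subseteq\; \av\bigl((p,R_1)\bigr).
\]
As $R^{\ast} \in \mathcal{S}$, the leftmost and rightmost members are equal, forcing $\av\bigl((p,R_0)\bigr) = \av\bigl((p,R_1)\bigr)$, i.e.\ $(p,R_0) \asymp (p,R_1)$ --- contradicting Example~\ref{ex:surprising non-coincidence}.

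I do not expect a genuine obstacle here: the coincidences $(p,R_1)\asymp(p,R_2)\asymp(p,R_3)$ follow from the Shading Lemma (Lemma~\ref{lem:shading}), and the non-coincidence of these with $(p,R_0)$ is witnessed by $42513$, both already supplied in Example~\ref{ex:surprising non-coincidence}; the only thing to observe is that a least element of $\mathcal{S}$ would have to be contained in $R_0$ and so would drag $(p,R_0)$ into the coincidence class. It is worth noting that the failure of a least element cannot be deduced merely from the incomparability of $R_1$ and $R_3$ in $\mathcal{S}$ --- that alone leaves room for a common lower bound lying inside $\mathcal{S}$ --- which is precisely why the squeezing step through $R_0$ is needed.
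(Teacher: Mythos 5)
Your proof is correct and takes essentially the same route as the paper, which states this corollary as an immediate consequence of Example~\ref{ex:surprising non-coincidence} without spelling out the deduction. Your squeezing step through $R_0 = R_1 \cap R_3$ via mesh monotonicity is precisely the implicit argument needed (it is an instance of what the paper later records as the Closure Lemma, Lemma~\ref{lem:closure}), so there is nothing to correct.
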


\section{Necessary conditions for mesh pattern coincidence}\label{sec:necessary}

Consider mesh patterns $(p,R)$ and $(p',R')$, and suppose that
$(p,R)\asymp (p',R')$.  Then two conditions must hold -- one condition
about $p$ and $p'$, and another about $R$ and $R'$.

\begin{lem}\label{lem:p=p'}
  If $(p,R)\asymp (p',R')$, then $p = p'$.
\end{lem}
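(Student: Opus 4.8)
The plan is to argue by contradiction: assume $(p,R) \asymp (p',R')$ but $p \neq p'$, where $p \in \mf{S}_k$ and $p' \in \mf{S}_{k'}$. First I would dispense with the case $k \neq k'$. If, say, $k < k'$, then the permutation $w = p$ (viewed as an element of $\mf{S}_k$) contains the classical pattern $p$, and in fact contains the mesh pattern $(p, R)$ in at least one way that is easy to exhibit — one takes the occurrence of $p$ in $p$ itself given by the identity choice of positions, for which every corresponding rectangle of $[0,k+1]^2$ in $G(w)$ is degenerate (contains no lattice points of $G(w)$ in its interior other than those forced), so the mesh condition is vacuously satisfied. Hence $p \in \cont(p,R) = \cont(p',R')$, which forces $p$ to contain the classical pattern $p'$; but a permutation of size $k$ cannot contain a classical pattern of size $k' > k$, a contradiction. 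So $k = k'$.

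Now suppose $k = k'$ but $p \neq p'$ as permutations in $\mf{S}_k$. The idea is again to use $w = p$ as a test permutation. As above, $p$ contains the mesh pattern $(p,R)$ via the identity occurrence (choosing positions $i_1 < \cdots < i_k$ to be $1 < 2 < \cdots < k$): the corresponding subregion of $[0,k+1]^2$ in $G(p)$ is exactly the mesh $R$ drawn against $G(p)$ itself, and since the squares of $R$ by definition contain no points of $G(p)$ in their interiors, the occurrence respects the mesh. Therefore $p \in \cont(p,R) = \cont(p',R')$, so in particular $p$ contains the classical pattern $p'$. Since $p$ and $p'$ both have size $k$, the only way $p$ can contain $p'$ is if $p = p'$ (the unique length-$k$ subsequence of a length-$k$ word is the whole word, and order-isomorphism of two permutations of the same size is equality). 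This contradicts $p \neq p'$.

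I would then also run the symmetric argument with the roles of $(p,R)$ and $(p',R')$ interchanged — using $w = p'$ as the test permutation — although strictly the first direction already yields $p = p'$ on its own once sizes are known to agree, so the symmetric step is only needed (via the size comparison) to handle $k > k'$ in the first paragraph. The one genuinely delicate point, and the step I expect to need the most care, is the claim that $p \in \cont(p,R)$ for \emph{every} mesh $R$: I must verify carefully from Definition~\ref{defn:corresponding regions} and Definition~\ref{defn:mesh} that, for the identity occurrence of $p$ inside $p$, the rectangle in $G(p)$ corresponding to a shaded unit square $(a,b) \in R$ is precisely that unit square $[a,a+1] \times [b,b+1]$ (with the convention $i_0 = 0$, $i_{k+1} = k+1$), and hence contains no point of $G(p)$ exactly because $(a,b)$ being a square of the mesh means it contains no point of $G(p)$. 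Once this identification is in hand the rest is immediate. This shows $\cont(p,R) \ni p$ and $\cont(p',R') \ni p'$, and the size/containment comparison finishes the proof.
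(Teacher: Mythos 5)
Your proof is correct and follows essentially the same route as the paper's: the paper observes that $p$ is the unique element of $\cont(p,R)$ of minimal length (which rests on exactly the fact you flag as delicate, namely that the identity occurrence of $p$ in itself satisfies any mesh vacuously because all corresponding rectangles are degenerate), and then concludes $p=p'$ from $\cont(p,R)=\cont(p',R')$. The only difference is presentational --- you phrase it via classical containment and a size comparison, while the paper compresses the verification that $p\in\cont(p,R)$ into the word ``Clearly.''
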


\begin{proof}
  Suppose that $p$ is of length $k$. Let $\pi=(p,R)$ and
  $\pi'=(p',R')$. Clearly, $\mf{S}_k \cap \cont(\pi) = \{p\}$, and
  $\mf{S}_i \cap \cont(\pi) = \emptyset$ for all $i < k$.  In other
  words, there is a unique element of $\cont(\pi)$ using a minimal
  number of symbols, and that element is $p$.  An analogous statement
  holds for $\cont(\pi')$ and $p'$.  Further, if $\pi \asymp \pi'$ then
  $\cont(\pi) = \cont(\pi')$, and thus $p=p'$.
\end{proof}

We now consider coincident mesh patterns $(p,R) \asymp (p,R')$ with the same
underlying classical pattern. Example~\ref{ex:surprising non-coincidence} shows
that having the same enclosed diagonals is not a sufficient condition for coincidence,
but the lemma below shows that it is necessary.

\begin{lem}\label{lem:same enc diag}
  If $(p,R) \asymp (p,R')$, then $\enc{(p,R)} = \enc{(p,R')}$.
\end{lem}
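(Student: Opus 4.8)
The plan is to prove the contrapositive: if $\enc{(p,R)} \neq \enc{(p,R')}$, then $(p,R) \not\asymp (p,R')$. By symmetry we may assume there is an enclosed diagonal $D$ belonging to $\enc{(p,R)}$ but not to $\enc{(p,R')}$. Since both mesh patterns share the same underlying permutation $p$, and $\enc{}$ depends only on $p$ and which squares are shaded, the only way $D$ can fail to be enclosed in $(p,R')$ is that $D \not\subseteq R'$; that is, some square of $D$ is shaded in $R$ but not in $R'$. (Here I would invoke Theorem~\ref{thm:classical} implicitly only as motivation; the real engine is the combinatorics of diagonals from Definition~\ref{defn:enclosed} together with Lemma~\ref{lem:enclosed diagonals are stable}.) The strategy is then to exhibit a single permutation $w$ that distinguishes the two patterns — one that contains $(p,R')$ but avoids $(p,R)$.

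The construction of $w$ is the heart of the argument. Take the occurrence of $p$ in $w$ to be a slightly "inflated" copy of $p$: replace the points of $p$ by the obvious order-isomorphic points, but insert extra points precisely along the diagonal $D$. Concretely, if $D = \{(a+i,b+i): i \in [0,c]\}$ is an enclosed NE-diagonal, then between the two consecutive points of $G(p)$ that flank $D$ (these exist by the defining condition of an enclosed diagonal — one endpoint of the diagonal is bounded below by a point of $G(p)$ and the top by another, with interior lattice points of the diagonal hitting $G(p)$), I would insert $c+1$ new points forming an increasing run that lies exactly inside the squares of $D$ and inside no other square of the ambient mesh. Every other square of $[0,k+1]^2$, when it gets subdivided by the insertion, acquires new sub-columns and sub-rows but no new points outside of $D$. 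The claim is that with this $w$: (i) the only occurrences of $p$ in $w$ are "essentially" the original inflated one (one must rule out spurious occurrences created by the inserted increasing run, which is routine since an increasing run of length $c+1$ cannot by itself realize $p$ unless $p$ is itself increasing, and even then the shading kills it); (ii) relative to this occurrence, the region of $G(w)$ corresponding to any square of $R'$ is empty, because $R'$ does not contain the whole of $D$ so we can route the inserted points to avoid $R'$; and (iii) relative to this occurrence, the region corresponding to $R$ is nonempty, because $R \supseteq D$ and the inserted points lie inside $D$.

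The main obstacle — and the step I would spend the most care on — is point (ii): ensuring the inserted increasing run avoids every square of $R'$ simultaneously, not just the offending square of $D$. This requires a careful choice of which interval between flanking points of $G(p)$ we inflate and exactly where along the anti/diagonal the new points sit; the geometry of the enclosed diagonal (the fact that the squares of $D$ are "lined up" and flanked by points of $G(p)$ on either end) is what makes this possible, since the inserted points can be pushed arbitrarily close to the true diagonal line $y - x = b - a$ and that line, inside the relevant band, passes through no square of $R'$ lying "off" $D$ — or if it does, a symmetric argument swapping the roles of $D$ on the two sides handles it. I would also need to double-check the degenerate cases: when $D$ is pointless (length $1$), the construction simplifies to inserting a single point in that one square; when $D$ touches the boundary ($a = 0$ or $b+c = k$, etc.), the "flanking point" is the conventional sentinel point $(0,0)$ or $(k+1,k+1)$ and the argument goes through unchanged. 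Finally, I would remark that an entirely analogous construction with a decreasing run handles enclosed SE-diagonals, so the lemma follows in full generality.
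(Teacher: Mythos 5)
Your overall strategy --- prove the contrapositive and build a single permutation, by inflating $p$ along the offending diagonal $D$, that contains $(p,R')$ but avoids $(p,R)$ --- is exactly the paper's. The difference is that the paper inserts \emph{one} new point (a value $b+\tfrac12$ between positions $a$ and $a+1$, i.e.\ in the single square $(a,b)$), whereas you insert $c+1$ points, one in each square of $D$, and that choice is where your argument develops genuine gaps. First, claim (i) is false as stated: your inserted points interleave with the $c$ points of $G(p)$ already on the diagonal to form an increasing run of $2c+1$ points that is consecutive in both position and value, and \emph{any} $c$ of these $2c+1$ points complete to an occurrence of $p$ in $w$; there are on the order of $\binom{2c+1}{c}$ occurrences, not ``essentially one.'' Your reason for dismissing spurious occurrences (an increasing run cannot realize $p$ by itself) rules out the wrong thing --- the problematic occurrences are those in which inserted points substitute for diagonal points of $G(p)$. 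One can still check that every such occurrence leaves at least one unchosen run point sitting in some square of $D\subseteq R$, so avoidance of $(p,R)$ is salvageable, but that check is absent.

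Second, and more seriously, claim (ii) is wrong for the occurrence you fix: the inserted points sit precisely in the squares of $D$, and in general all but one square of $D$ may lie in $R'$ (only one square need be missing for $D$ to fail to be in $\enc{(p,R')}$), so the region corresponding to $R'$ is \emph{not} empty for the original occurrence. Your proposed fix, ``route the inserted points to avoid $R'$,'' contradicts your own construction of one point per square of $D$. What actually works is to choose a \emph{different} occurrence of $p$: take $t$ with $(a+t,b+t)\notin R'$ and select the $c$ run points so that all $c+1$ unchosen ones fall into that single unshaded square. The paper's single-point insertion sidesteps all of this: with one extra letter, every occurrence of $p$ in $q$ omits exactly one letter, that letter is forced to lie in the run, and the omitted point lands in exactly one square of $D$; since $D\subseteq R$ this kills every occurrence of $(p,R)$, while omitting the letter over the square absent from $R'$ exhibits an occurrence of $(p,R')$.
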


\begin{proof}
  Fix $p \in \mf{S}_k$ and suppose that $\pi=(p,R)$ has an enclosed
  diagonal that does not exist in $(p,R')$.  Without loss of generality,
  let this be a NE-diagonal comprised of
  $D = \{(a+i,b+i) : 0 \le i \le c\} \subseteq R$.
  Then, as a word,
  \[
    p \;=\;
    p(1)\, \cdots\,p(a)\,(b+1)\,(b+2)\,\cdots\,(b+c)\,p(a+c+1)\,\cdots\,p(k),
    \]
  where $p(a) \neq b$ and $p(a+c+1) \neq b+c+1$.

  Define $q \in \mf{S}_{k+1}$ to be the permutation that is order
  isomorphic to the word obtained by inserting $b+1/2$ between $p(a)$
  and $p(a+1)=b+1$ in the word for $p$. By construction, the only
  $p$-patterns in $q$ are obtained by excluding a single letter $q(x)$,
  where $a+1 \le x \le a+c+1$.  Thus there is no way to find an
  occurrence of $p$ that can be drawn in $G(q)$ so that the shaded
  region corresponding to the mesh of $(p,R)$ does not contain $(x,q(x))
  \in G(q)$.  Therefore $q \in \av(\pi)$.

  Now consider the mesh pattern $\pi'=(p,R')$, and take $t$ in $[0,c]$
  so that $(a+t,b+t) \not\in R'$.  Then omitting the letter $q(a+t+1)$
  from $q$ yields a $p$-pattern.  For this occurrence of $p$, the only
  element of $G(q)$ we need to worry about is $(a+t+1,q(a+t+1))$, but
  this point does not lie in a shaded region corresponding to the mesh
  of $\pi'$ because $(a+t,b+t) \not\in R'$.  Thus $q \in \cont(\pi')$,
  and so $\pi \not\asymp \pi'$.
\end{proof}

It is helpful to see an example illustrating the proof of
Lemma~\ref{lem:same enc diag}.

\begin{ex}
  Let $p = 25134\in\mf{S}_5$.  Suppose that $\{(3,2),(4,3),(5,4)\}
  \subset R$ and $(4,3) \not\in R'$. Let $\pi=(p,R)$ and $\pi'=(p,R')$.
  That is,
  $$\pi \;=\;
    \begin{tikzpicture}[scale=.4, xscale=-1, baseline={([yshift=-2.5pt]current bounding box.center)}]
      \foreach \x in {0,1,2,3,4,5} {
        \foreach \y in {0,1,2,3,4,5} {
          \filldraw[black!11] (\x,\y) -- +(0,1) -- +(1,1);
          \filldraw[black!1 ] (\x,\y) -- +(1,0) -- +(1,1);
        };
      };
      \foreach \x in {(0,4),(1,3),(2,2)} {
        \fill[white] \x rectangle ++(1,1);
        \fill[black!21] \x rectangle ++(1,1);
      }
      \foreach \x in {1,2,3,4,5} {
        \draw[gray] (0,\x) -- (6,\x);
        \draw[gray] (\x,0) -- (\x,6);
      }
      \foreach \x in {(1,4),(2,3),(3,1),(4,5),(5,2)} {
        \fill[black] \x circle (5pt);
      }
    \end{tikzpicture}
    \;\quad\text{and}\quad
    \pi' \;=\;
    \begin{tikzpicture}[scale=.4, xscale=-1, baseline={([yshift=-2.5pt]current bounding box.center)}]
      \foreach \x in {0,1,2,3,4,5} {
        \foreach \y in {0,1,2,3,4,5} {
          \filldraw[black!11] (\x,\y) -- +(0,1) -- +(1,1);
          \filldraw[black!1 ] (\x,\y) -- +(1,0) -- +(1,1);
        };
      };
      \fill[white] (1,3) rectangle (2,4);
      \foreach \x in {1,2,3,4,5} {
        \draw[gray] (0,\x) -- (6,\x);
        \draw[gray] (\x,0) -- (\x,6);
      }
      \foreach \x in {(1,4),(2,3),(3,1),(4,5),(5,2)} {\fill[black] \x circle (5pt);}
    \end{tikzpicture}\,,
    $$
    where a half-filled square,\!  \tikz[scale=0.27]{
      \filldraw[black!11] (0,0) -- +(0,1) -- +(1,1); \filldraw[black!1 ]
      (0,0) -- +(1,0) -- +(1,1); \draw[gray] (0,0) rectangle (1,1); }\,,
    indicates a square whose shading status --- that is, whose presence in
    the mesh --- is undeclared.  The proof of Lemma~\ref{lem:same enc
      diag} constructs the permutation $q = 261345\in \mf{S}_6$, which has
    three $p$-patterns: $26145$, $26135$, and $26134$. In each of
    these occurrences, there is an element of $G(q)$ located in the
    region corresponding to the mesh from $\pi$. This is shown below.
  $$
  \begin{tikzpicture}[scale=.4, xscale=-1]
    \foreach \x in {0,1,2,3,4,5,6} {
      \foreach \y in {0,1,2,3,4,5,6} {
        \filldraw[black!11] (\x,\y) -- +(0,1) -- +(1,1);
        \filldraw[black!1 ] (\x,\y) -- +(1,0) -- +(1,1);
      };
    };
    \fill[black!21] (0,5) rectangle (1,6);
    \fill[black!21] (1,4) rectangle (2,5);
    \fill[black!21] (2,1) rectangle (4,4);
    \foreach \x in {1,2,3,4,5,6} {
      \draw[gray] (0,\x) -- (7,\x);
      \draw[gray] (\x,0) -- (\x,7);
    }
    \foreach \x in {(1,5),(2,4),(3,3),(4,1),(5,6),(6,2)} {
      \fill[black] \x circle (5pt);
    }
    \foreach \x in {(1,5),(2,4),(4,1),(5,6),(6,2)} {
      \draw \x circle (10pt);
    }
    \draw[thick] (0,5) rectangle (1,6);
    \draw[thick] (1,4) rectangle (2,5);
    \draw[thick] (2,1) rectangle (4,4);
  \end{tikzpicture}
  \qquad\;
  \begin{tikzpicture}[scale=.4, xscale=-1]
    \foreach \x in {0,1,2,3,4,5,6} {
      \foreach \y in {0,1,2,3,4,5,6} {
        \filldraw[black!11] (\x,\y) -- +(0,1) -- +(1,1);
        \filldraw[black!1 ] (\x,\y) -- +(1,0) -- +(1,1);
      };
    };
    \fill[black!21] (0,5) rectangle (1,6);
    \fill[black!21] (1,3) rectangle (3,5);
    \fill[black!21] (3,1) rectangle (4,3);
    \foreach \x in {1,2,3,4,5,6} {
      \draw[gray] (0,\x) -- (7,\x);
      \draw[gray] (\x,0) -- (\x,7);
    }
    \foreach \x in {(1,5),(2,4),(3,3),(4,1),(5,6),(6,2)} {\fill[black] \x circle (5pt);}
    \foreach \x in {(1,5),(3,3),(4,1),(5,6),(6,2)} {\draw \x circle (10pt);}
    \draw[thick] (0,5) rectangle (1,6);
    \draw[thick] (1,3) rectangle (3,5);
    \draw[thick] (3,1) rectangle (4,3);
  \end{tikzpicture}
  \qquad\;
  \begin{tikzpicture}[scale=.4, xscale=-1]
    \foreach \x in {0,1,2,3,4,5,6} {
      \foreach \y in {0,1,2,3,4,5,6} {
        \filldraw[black!11] (\x,\y) -- +(0,1) -- +(1,1);
        \filldraw[black!1 ] (\x,\y) -- +(1,0) -- +(1,1);
      };
    };
    \fill[black!21] (0,4) rectangle (2,6);
    \fill[black!21] (2,3) rectangle (3,4);
    \fill[black!21] (3,1) rectangle (4,3);
    \foreach \x in {1,2,3,4,5,6} {
      \draw[gray] (0,\x) -- (7,\x);
      \draw[gray] (\x,0) -- (\x,7);
    }
    \foreach \x in {(1,5),(2,4),(3,3),(4,1),(5,6),(6,2)} {\fill[black] \x circle (5pt);}
    \foreach \x in {(2,4),(3,3),(4,1),(5,6),(6,2)} {\draw \x circle (10pt);}
    \draw[thick] (0,4) rectangle (2,6);
    \draw[thick] (2,3) rectangle (3,4);
    \draw[thick] (3,1) rectangle (4,3);
  \end{tikzpicture}
  $$
  On the other hand, because $(4,3) \not\in R'$ the occurrence $26135$
  of $p$ in $q$ does not share this property:
  $$
  \begin{tikzpicture}[scale=.4, xscale=-1]
    \foreach \x in {0,1,2,3,4,5,6} {
      \foreach \y in {0,1,2,3,4,5,6} {
        \filldraw[black!11] (\x,\y) -- +(0,1) -- +(1,1);
        \filldraw[black!1 ] (\x,\y) -- +(1,0) -- +(1,1);
      };
    };
    \fill[white] (1,3) rectangle (3,5);
    \foreach \x in {1,2,3,4,5,6} {
      \draw[gray] (0,\x) -- (7,\x);
      \draw[gray] (\x,0) -- (\x,7);
    }
    \foreach \x in {(1,5),(2,4),(3,3),(4,1),(5,6),(6,2)} {
      \fill[black] \x circle (5pt);
    }
    \foreach \x in {(1,5),(3,3),(4,1),(5,6),(6,2)} {
      \draw \x circle (10pt);
    }
    \draw[thick] (1,3) rectangle (3,5);
  \end{tikzpicture}
  $$
  Therefore $q \in \av(\pi)$ and $q \in \cont(\pi')$.
\end{ex}

Lemmas~\ref{lem:p=p'} and~\ref{lem:same enc diag} give necessary
requirements for coincidence of mesh patterns. In Section~\ref{sec:enc
  diag enough}, we explore circumstances under which these conditions are
also sufficient, and in Section~\ref{sec:enc diag not enough}, we discuss
others where they are not.

\section{Pattern families that are governed by enclosed diagonals}\label{sec:enc diag enough}

There are some families of mesh patterns whose coincidence is entirely
characterized by the collection of enclosed diagonals.

\begin{defn}
  A mesh pattern $(p,R)$ is \emph{vincular} if the mesh $R$ is a union
  of complete columns. This $(p,R)$ can also be described as the
  permutation $p \in \mf{S}_k$ together with ``bonds'' between $p(i)$
  and $p(i+1)$ exactly when $\{(i,y) : 0 \le y \le k\} \subseteq
  R$. Thus a permutation $w$ contains this vincular pattern if and only
  if $w$ has an occurrence of $p$ in which all bonded subwords appear
  consecutively in $w$.
\end{defn}

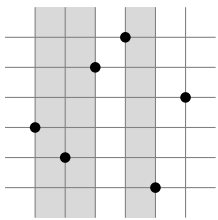
\begin{figure}[htbp]
  $$
  \begin{tikzpicture}[scale=.4]
    \fill[lightgray] (1,0) rectangle (3,7);
    \fill[lightgray] (4,0) rectangle (5,7);
    \foreach \x in {1,2,3,4,5,6} {
      \draw[gray] (\x,0) -- (\x,7);
      \draw[gray] (0,\x) -- (7,\x);
    }
    \foreach \x in {(1,3),(2,2),(3,5),(4,6),(5,1),(6,4)} {
      \fill[black] \x circle (5pt);
    }
  \end{tikzpicture}
  $$
  \caption{A vincular pattern.}\label{fig:vincular}
\end{figure}

The mesh pattern in Figure~\ref{fig:vincular} is vincular
because its mesh is a union of columns.
It could also be denoted
$\underbracket[.5pt][1pt]{325}\underbracket[.5pt][1pt]{61}\!4$, with
brackets indicating the bonds.
Vincular patterns have arisen in many places, including
\cite{babson-steingrimsson, claesson-01, steingrimsson, tenner-coincidental}. Here,
we show that coincidence within this family of mesh patterns behaves
exactly as one might hope.

\begin{prop}\label{prop:vincular}
  Let $(p,R)$ and $(p,R')$ be vincular patterns where $p \in \mathfrak{S}_k$. If
  $\enc{(p,R)} = \enc{(p,R')}$, then $(p,R) \asymp (p,R')$.  If, moreover,
  $k > 3$, then $R = R'$ and so the two patterns are the same.
\end{prop}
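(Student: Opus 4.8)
The plan is to show that every vincular pattern is coincident to a canonical ``core'' mesh, and that this core is determined by the set of enclosed diagonals. Call a shaded column $j$ of $(p,R)$ \emph{inert} if no square in column $j$ belongs to any enclosed diagonal of $(p,R)$, and let $C(R)$ be the union of those columns of $R$ that \emph{do} meet an enclosed diagonal. The crux is the claim that deleting an inert column does not change the avoidance set: if column $j$ is inert, then $(p,R)\asymp(p,R\setminus\{\text{column }j\})$. Granting this, one deletes inert columns one at a time; each deletion leaves $\enc{(p,R)}$ unchanged (by Lemma~\ref{lem:enclosed diagonals are stable}(a) no new enclosed diagonal appears, and none is destroyed since the removed squares lay in no enclosed diagonal), so the inert status of the surviving columns is unaffected and the process terminates at $(p,C(R))$ with $(p,R)\asymp(p,C(R))$. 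Since $C(R)$ can be read off directly from $\enc{(p,R)}$, the hypothesis $\enc{(p,R)}=\enc{(p,R')}$ yields $C(R)=C(R')$, whence $(p,R)\asymp(p,C(R))=(p,C(R'))\asymp(p,R')$.

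To prove the claim I would first record an isolation property: if columns $i-1$ and $i$ of a vincular pattern are both shaded, then column $i$ is not inert. Because $R$ is a union of full columns, the squares $(i-1,p(i)-1)$, $(i-1,p(i))$, $(i,p(i))$, $(i,p(i)-1)$ all lie in $R$, and a direct check against Definition~\ref{defn:enclosed} shows that $D_1=\{(i-1,p(i)-1),(i,p(i))\}$ is an enclosed NE-diagonal exactly when $p(i-1)\neq p(i)-1$ and $p(i+1)\neq p(i)+1$, while $D_2=\{(i-1,p(i)),(i,p(i)-1)\}$ is an enclosed SE-diagonal exactly when $p(i-1)\neq p(i)+1$ and $p(i+1)\neq p(i)-1$; in the boundary cases $i\in\{1,k\}$ the grid-boundary point involved lies outside $G(p)$, so the pertinent inequality is automatic. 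Since $p$ is a permutation, no value is repeated, and no way of choosing one disjunct from the first clause and one from the second is consistent; hence at least one of $D_1,D_2$ is a genuine enclosed diagonal, and it meets column $i$. The same computation applied to columns $i$ and $i+1$ shows that an inert column has no shaded neighbour.

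Now let column $j$ be inert; by the isolation property columns $j-1$ and $j+1$ are unshaded. Column $j$ contains no pointless square (a pointless square is a length-one enclosed diagonal), so every square of column $j$ touches $G(p)$; the only points that can be corners of squares in column $j$ are $(j,p(j))$ and $(j+1,p(j+1))$, so there are at most four such squares, each the NE, SE, NW, or SW square of one of these two points. I would re-insert these squares into $R\setminus\{\text{column }j\}$ one at a time, using the appropriate rotation of the Shading Lemma (Lemma~\ref{lem:shading}). Every hypothesis holds: the two conditions quantified over all columns $x$, respectively all rows $y$, are vacuous or trivially true because in a vincular pattern each column is wholly shaded or wholly unshaded; the remaining conditions refer only to squares in the two columns flanking the chosen point, one of which is column $j-1$ or column $j+1$ and hence unshaded, so those conditions hold no matter how many squares of column $j$ have already been re-inserted. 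This gives $(p,R)\asymp(p,R\setminus\{\text{column }j\})$ and completes the first assertion.

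For the second assertion, suppose $k>3$. Then each column consists of $k+1>4$ unit squares, but at most four of them can touch $G(p)$ (only the points $(j,p(j))$ and $(j+1,p(j+1))$ are corners of squares in column $j$), so every shaded column contains a pointless square and is therefore not inert; consequently $C(R)=R$, and the first part gives $R=C(R)=C(R')=R'$. (When $k\leq 3$ inert columns do occur---for instance the single bonded column of the vincular pattern on $132$ with a bond only between its first two letters is inert---so the reduction above has genuine content and the restriction $k>3$ is needed.) I expect the delicate point to be the verification in the third paragraph: checking all four rotations of the Shading Lemma, confirming that partial re-shading of column $j$ never spoils a hypothesis, and treating the boundary columns $0$ and $k$ uniformly throughout.
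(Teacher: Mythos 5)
Your proof is correct, and for the part of the proposition that actually requires a coincidence argument it takes a genuinely different route from the paper's. The published proof disposes of $k\le 3$ with the single remark that it is ``easy to check,'' and for $k>3$ argues exactly as in your final paragraph: each shaded column of a vincular mesh contains at least $k+1-4>0$ pointless squares, each of which is a length-one enclosed diagonal, so $\enc{(p,R)}=\enc{(p,R')}$ already forces $R$ and $R'$ to consist of the same columns, and the asserted coincidence is then the trivial one $R=R'$. What you add is a uniform, constructive treatment of the small cases: the notion of an inert column; the $D_1$/$D_2$ dichotomy (a correct case analysis, since all four ways of simultaneously falsifying both diagonals contradict the injectivity of $p$) showing that an inert column has no shaded neighbour; and the square-by-square reconstruction of an inert column via the four rotations of Lemma~\ref{lem:shading}, whose hypotheses you rightly observe survive partial re-shading because the quantified conditions exclude the column being rebuilt while the flanking columns are empty. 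Your bookkeeping that deletion of an inert column preserves $\enc{(p,R)}$ (via Lemma~\ref{lem:enclosed diagonals are stable}) is also sound. This buys an actual proof of the $k\le 3$ coincidences (such as the one in Figure~\ref{fig:2-3-1-and-2-31}) instead of an appeal to inspection, and it identifies a canonical representative $(p,C(R))$ of each coincidence class of vincular patterns, readable from $\enc{(p,R)}$ alone. The price is length and a dependence on the unstated northwest, southeast, and southwest variants of the Shading Lemma, which the paper only asserts exist; the paper's proof, by contrast, needs nothing beyond the count of pointless squares once $k>3$.
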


\begin{proof}
  Suppose that $(p,R)$ and $(p,R')$ are vincular patterns with $p \in
  \mf{S}_k$ and $\enc{(p,R)} = \enc{(p,R')}$. The result is easy to
  check for $k \le 3$, so suppose $k > 3$. Each column in the meshes $R$
  and $R'$ contains at least $k+1 - 4 > 0$ pointless squares, and each of those
  pointless squares is an enclosed diagonal. Thus we have, in fact, that the
  meshes $R$ and $R'$ are comprised of exactly the same columns and so
  $R = R'$. Coincidence is a reflexive relation, completing the proof.
\end{proof}

The example in Figure~\ref{fig:2-3-1-and-2-31} (see \cite[Lemma 2]{claesson-01})
shows that when $p \in \mf{S}_k$ and $k \le 3$, there are indeed pairs
of distinct coincidental vincular patterns.

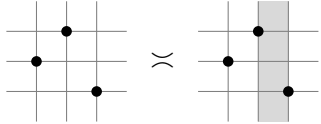
\begin{figure}[htbp]
  $$
  \begin{tikzpicture}[scale=.4, baseline={([yshift=-2.5pt]current bounding box.center)}]
    \foreach \x in {1,2,3} {
      \draw[gray] (\x,0) -- (\x,4);
      \draw[gray] (0,\x) -- (4,\x);
    }
    \foreach \x in {(1,2),(2,3),(3,1)} {\fill[black] \x circle (5pt);}
  \end{tikzpicture}
  \;\,\asymp\,\;
  \begin{tikzpicture}[scale=.4, baseline={([yshift=-2.5pt]current bounding box.center)}]
    \fill[lightgray] (2,0) rectangle (3,4);
    \foreach \x in {1,2,3} {
      \draw[gray] (\x,0) -- (\x,4);
      \draw[gray] (0,\x) -- (4,\x);
    }
    \foreach \x in {(1,2),(2,3),(3,1)} {\fill[black] \x circle (5pt);}
  \end{tikzpicture}
  $$
  \caption{Two distinct but coincidental vincular patterns.}\label{fig:2-3-1-and-2-31}
\end{figure}

\begin{cor}
  Two vincular patterns $(p,R)$ and $(p',R')$ are coincident if and only
  if $p=p'$ and $\enc{(p,R)} = \enc{(p',R')}$.  Moreover, if $p \in
  \mf{S}_k$ for $k > 3$, then $(p,R) \asymp (p',R')$ if and only if
  $(p,R) = (p',R')$.
\end{cor}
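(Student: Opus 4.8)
The plan is to deduce this corollary by combining three results already in hand: Lemma~\ref{lem:p=p'}, Lemma~\ref{lem:same enc diag}, and Proposition~\ref{prop:vincular}. For the forward direction of the first statement, suppose $(p,R) \asymp (p',R')$. Lemma~\ref{lem:p=p'} immediately yields $p = p'$. Having reduced to the case of a common underlying permutation, we may now invoke Lemma~\ref{lem:same enc diag}, which gives $\enc{(p,R)} = \enc{(p',R')}$. (Here it is worth noting that vincularity of the two patterns plays no role in this implication; the necessity of $p = p'$ and of matching enclosed diagonals holds for arbitrary mesh patterns.)

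For the converse of the first statement, assume $p = p'$ and $\enc{(p,R)} = \enc{(p',R')}$. Since $(p,R)$ and $(p,R')$ are both vincular patterns with the same underlying permutation $p$, Proposition~\ref{prop:vincular} applies directly and delivers $(p,R) \asymp (p,R') = (p',R')$, completing the biconditional. For the ``moreover'' part, suppose $p \in \mf{S}_k$ with $k > 3$. If $(p,R) \asymp (p',R')$, then the first statement gives $p = p'$ and $\enc{(p,R)} = \enc{(p',R')}$, and the second assertion of Proposition~\ref{prop:vincular} then forces $R = R'$, so $(p,R) = (p',R')$. The reverse implication is immediate from reflexivity of $\asymp$.

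There is no real obstacle here: the substantive work was carried out in Proposition~\ref{prop:vincular} and in the necessity lemmas of Section~\ref{sec:necessary}, and what remains is assembly. The one point demanding a little care is the order of deduction --- Lemma~\ref{lem:p=p'} must be applied first, since Lemma~\ref{lem:same enc diag} and Proposition~\ref{prop:vincular} are each stated only for pairs of patterns sharing an underlying permutation, so the reduction $p = p'$ must be secured before either can be used.
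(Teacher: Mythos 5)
Your proposal is correct and follows exactly the paper's own (one-line) proof, which likewise assembles the corollary from Lemma~\ref{lem:p=p'}, Lemma~\ref{lem:same enc diag}, and Proposition~\ref{prop:vincular}. Your expanded write-up, including the observation that $p=p'$ must be established first so that the other two results apply, is a faithful elaboration of the same argument.
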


\begin{proof}
  This follows from Proposition~\ref{prop:vincular} together with Lemmas~\ref{lem:p=p'}
  and~\ref{lem:same enc diag}.
\end{proof}

Vincularity is a property of the mesh. The next family we examine can
also be defined by its mesh, this time requiring that the mesh be
suitably meager.

\begin{defn}
  A mesh pattern $(p,R)$ is \emph{isolating} if whenever $(i,j) \in R$
  is not a pointless square, then the mesh $R$ contains no elements of
  the form $(i\pm1,y)$ or $(x,j\pm1)$.
\end{defn}

As with vincular patterns, coincidence among isolating patterns is
determined entirely by the set of enclosed diagonals in the isolating
pattern.

\begin{prop}
  If $(p,R)$ and $(p,R')$ are isolating mesh patterns with $\enc{(p,R)}
  = \enc{(p,R')}$, then $(p,R) \asymp (p,R')$.
\end{prop}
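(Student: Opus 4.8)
The plan is to reduce the claim to showing that for isolating patterns, membership of a square in the mesh is "locally detectable" unless that square belongs to an enclosed diagonal, and that enclosed diagonals are forced to coincide by Lemma~\ref{lem:same enc diag}. By Lemma~\ref{lem:same enc diag} the hypothesis $\enc{(p,R)} = \enc{(p,R')}$ is consistent with coincidence, so what we need is the converse direction within this family: that two isolating meshes with the same enclosed diagonals must in fact be \emph{equal} as sets of squares, except possibly for squares that lie on a common enclosed diagonal (where, as in the vincular case and Figure~\ref{fig:2-3-1-and-2-31}, a short diagonal may or may not be fully shaded without affecting avoidance). Concretely, I would first show: if $(i,j)\in R$ is \emph{pointless} and not part of any proper enclosed diagonal of $R$, then removing it changes the avoidance set, hence $(i,j)\in R'$ as well. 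This is exactly the kind of witness construction used in the proof of Lemma~\ref{lem:same enc diag}: insert a new point into $G(p)$ in the interior of the square $(i,j)$ to create a permutation $q\in\mf{S}_{k+1}$ whose only $p$-patterns omit that inserted point; then $q$ avoids $(p,R)$ but, if $(i,j)\notin R'$, contains $(p,R')$.

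The main work is handling a \emph{non-pointless} square $(i,j)\in R$ — that is, one of the four squares incident to a point $(x,p(x))\in G(p)$. Here the isolating hypothesis is what makes things tractable: since $(i,j)$ is not pointless, none of the horizontally or vertically adjacent squares $(i\pm1,\cdot)$, $(\cdot,j\pm1)$ lie in $R$, so $(i,j)$ is surrounded (in its row and column) by unshaded squares. I would build a witness permutation $w$ by taking $p$ and adding a single new point placed so that it falls precisely in the region of $G(w)$ corresponding to the square $(i,j)$, relative to the unique occurrence of $p$ in $w$; the isolating condition guarantees that this new point does not also land in any shaded square of $R$, so that the occurrence of $p$ inside $w$ witnesses $w\in\cont(p,R')$ whenever $(i,j)\notin R'$, while $w\in\av(p,R)$ because that occurrence (and, one checks, every occurrence of $p$ in $w$) is spoiled by the new point sitting in the shaded square $(i,j)$. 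The delicate point is that one must rule out \emph{other} occurrences of $p$ in $w$ that might dodge the mesh of $(p,R)$; adding the new point ``just outside'' $G(p)$ near $(x,p(x))$ and using the isolating structure to control where auxiliary occurrences can sit is where the argument needs care — this is the step I expect to be the main obstacle, and it is the analogue of the case analysis in the Shading Lemma~\ref{lem:shading}.

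Finally, I would assemble these pieces: for every square $(i,j)\in R$ that is \emph{not} contained in a proper enclosed diagonal — whether pointless or incident to a point of $G(p)$ — the witness constructions above show $(i,j)\in R'$, and symmetrically $R'\setminus(\text{proper enclosed diagonals})\subseteq R$. Thus $R$ and $R'$ agree off their proper enclosed diagonals, and by hypothesis together with Lemma~\ref{lem:enclosed diagonals are stable} they have exactly the same proper enclosed diagonals $D_1,\dots,D_m$. It then remains to observe that, within an isolating pattern, the presence or absence of a sub-collection of squares of a single enclosed diagonal $D$ does not affect avoidance — this is precisely the content of Theorem~\ref{thm:classical} applied locally, or can be seen directly: any witness distinguishing two such meshes would, by the argument of Lemma~\ref{lem:same enc diag}, produce a point inside $D$, and the isolating hypothesis plus the enclosed-diagonal condition force the same conclusion for both meshes. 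Hence $\av(p,R)=\av(p,R')$, i.e. $(p,R)\asymp(p,R')$.
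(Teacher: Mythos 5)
There is a genuine gap, located exactly at the step you flag as ``the main obstacle,'' and it is not repairable along your lines. Your plan is to show that $R$ and $R'$ agree off their proper enclosed diagonals; but an isolating mesh has \emph{no} proper enclosed diagonals (a proper enclosed NE-diagonal would put both $(a,b)$ and $(a+1,b+1)$ in $R$ with $(a+1,b+1)\in G(p)$, so $(a,b)$ is non-pointless and isolation forbids the adjacent shaded square $(a+1,b+1)$), so your plan amounts to proving $R=R'$. That is false: $(12,\{(0,0)\})$ and $(12,\emptyset)$ are both isolating, both have no enclosed diagonals, and both are coincident with the classical pattern $12$ by Theorem~\ref{thm:classical}, yet their meshes differ at the non-pointless square $(0,0)$. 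Accordingly, the witness construction for a non-pointless square $(i,j)$ incident to $(x,p(x))\in G(p)$ cannot succeed: once you insert a new point into the region corresponding to $(i,j)$, swapping $(x,p(x))$ for that new point yields another occurrence of $p$, and the isolating hypothesis (no shaded squares in the adjacent rows and columns of the mesh) guarantees that this new occurrence clears all of $R$. So your $w$ lies in $\cont(p,R)$, not in $\av(p,R)$. This failure is precisely the mechanism of the Shading Lemma, which is the tool the paper's one-line proof invokes.

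The intended argument is short. Since isolating patterns have no proper enclosed diagonals, the hypothesis $\enc{(p,R)}=\enc{(p,R')}$ says exactly that $R$ and $R'$ contain the same pointless squares (each pointless square is itself a length-one enclosed diagonal; this also makes your first step unnecessary, and as written that step is logically backwards, since it derives a consequence of coincidence rather than of the stated hypothesis). For any non-pointless square $s$ of an isolating mesh $S$, the four conditions of the appropriate variant of Lemma~\ref{lem:shading} hold vacuously for the mesh $S\setminus\{s\}$, because isolation empties the relevant adjacent rows and columns; hence $(p,S)\asymp(p,S\setminus\{s\})$, and iterating (subsets of isolating meshes are isolating) reduces both $(p,R)$ and $(p,R')$ to the common pattern $(p,R_0)$, where $R_0$ is their shared set of pointless squares.
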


\begin{proof}
  The Shading Lemma of~\cite{wilfshort} applies directly to show that
  such isolating patterns are coincident.
\end{proof}

\section{Pattern families that are not governed by enclosed diagonals}\label{sec:enc diag not enough}

In this section we look at some families of mesh patterns for which
common enclosed diagonals are not enough to determine pattern
coincidence. Despite this fact, these families are not entirely
unrelated to those discussed in Section~\ref{sec:enc diag enough}.

Recall the definition of vincular patterns from Section~\ref{sec:enc
  diag enough}. This placed a requirement on the columns of a mesh but
not on the rows, suggesting a certain asymmetry. Indeed, as studied in
\cite{bousquet-melou claesson dukes kitaev}, a more balanced definition
yields another interesting class of permutation patterns.

\begin{defn}
  \emph{Bivincular} patterns are mesh patterns in which the mesh is a
  union of complete rows and complete columns.
\end{defn}

Given their similarity to vincular patterns, one might hope that
coincidence among bivincular patterns can also be fully characterized by
enclosed diagonals. This is not the case, however, as demonstrated below.

\begin{ex}\label{ex:bivincular}\
  Up to symmetry, the smallest non-coincident bivincular patterns
  having the same enclosed diagonals are
  $$
  \begin{tikzpicture}[scale=.4, baseline=9.6pt]
    \foreach \x in {(0,0),(0,1),(1,0)} {
      \fill[lightgray] \x rectangle ++(1,1);
    }
    \foreach \x in {1} {
      \draw[gray] (\x,0) -- (\x,2);
      \draw[gray] (0,\x) -- (2,\x);
    }
    \foreach \x in {(1,1)} {\fill[black] \x circle (5pt);}
  \end{tikzpicture}
  \;\,\not\asymp\,\;
  \begin{tikzpicture}[scale=.4, baseline=9.6pt]
    \foreach \x in {(1,1),(0,1),(1,0)} {\fill[lightgray] \x rectangle ++(1,1);}
    \foreach \x in {1} {
      \draw[gray] (\x,0) -- (\x,2);
      \draw[gray] (0,\x) -- (2,\x);
    }
    \foreach \x in {(1,1)} {\fill[black] \x circle (5pt);}
  \end{tikzpicture}.
  $$
  The unique enclosed diagonal in each case is $\{(0,1),(1,0)\}$, but
  the patterns are not coincident because $132$ contains the first
  pattern and avoids the second.  For a larger example see
  Figure~\ref{fig:non-coincident-bivincular}.
  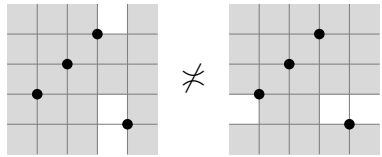
\begin{figure}[htbp]
    $$
    \begin{tikzpicture}[scale=.4, baseline=26pt]
      \foreach \x in {(0,0),(0,1),(0,2),(0,3),(0,4),(1,0),(1,1),
                      (1,2),(1,3),(1,4),(2,0),(2,1),(2,2),(2,3),
                      (2,4),(3,2),(3,3),(4,0),(4,1),(4,2),(4,3),(4,4)} {
        \fill[lightgray] \x rectangle ++(1,1);
      }
      \foreach \x in {1,2,3,4} {
        \draw[gray] (\x,0) -- (\x,5);
        \draw[gray] (0,\x) -- (5,\x);
      }
      \foreach \x in {(1,2),(2,3),(3,4),(4,1)} {\fill[black] \x circle (5pt);}
    \end{tikzpicture}
    \;\,\not\asymp\,\;
    \begin{tikzpicture}[scale=.4, baseline=26pt]
      \foreach \x in {(0,0),(0,2),(0,3),(0,4),(1,0),(1,1),(1,2),(1,3),
                      (1,4),(2,0),(2,1),(2,2),(2,3),(2,4),(3,0),(3,2),
                      (3,3),(3,4),(4,0),(4,2),(4,3),(4,4)} {
        \fill[lightgray] \x rectangle ++(1,1);
      }
      \foreach \x in {1,2,3,4} {
        \draw[gray] (\x,0) -- (\x,5);
        \draw[gray] (0,\x) -- (5,\x);
      }
      \foreach \x in {(1,2),(2,3),(3,4),(4,1)} {\fill[black] \x circle (5pt);}
    \end{tikzpicture}
    $$
    \caption{Two non-coincident bivincular patterns with the same
      enclosed diagonals.}\label{fig:non-coincident-bivincular}
  \end{figure}
  The proper enclosed diagonals in each case are $\{(0,2), (1,1)\},
  \{(1,3), (2,2)\}$ and $\{(2,4), (3,3)\}$, and they have the same
  eleven pointless squares, but the patterns are not coincident because
  $345162$ contains the first pattern and avoids the second.
\end{ex}

Recall the definition of isolating patterns from Section~\ref{sec:enc
  diag enough}, and consider the following class of patterns that might,
initially, appear to be similarly desolate.

\begin{defn}
  A mesh pattern $(p,R)$ is \emph{sparse} if it has at most one shaded
  square in each row and each column.
\end{defn}

Unfortunately, coincidence within this family of mesh patterns is not
solely dependent on the enclosed diagonals in the pattern. That is,
there are non-coincident sparse mesh patterns that have the same
enclosed diagonals.

\begin{ex}
  The mesh patterns $\pi = (231, \{ (3,2) \})$ and $\sigma = (231, \{
  (1,3), (3,2) \})$ are both sparse. Further, they satisfy $\enc{(\pi)}
  = \enc{(\sigma)} = \{(3,2)\}$, but $\pi \not\asymp \sigma$ because
  $25314$ contains $\pi$ but avoids $\sigma$.
\end{ex}

One might hope that nice geometric properties of the mesh would be
enough to ensure that enclosed diagonals determine
coincidence. Unfortunately, even a basic property like mesh connectivity
is insufficient, as demonstrated in Example~\ref{ex:bivincular} above.

\section{The simultaneous shading lemma}\label{sec:ssl}

We now give an extension of the Shading Lemma
(Lemma~\ref{lem:shading} above), the conditions of which are depicted in
Figure~\ref{fig:doubleshading}.

\begin{cor}[Double Shading Lemma (east)] \label{cor:shadinglemmaextension}
  Let $(p,R)$ be a mesh pattern. Let $i$ be such that the following
  conditions all hold.
  \begin{itemize}
  \item No square incident to $(i,p(i)) \in G(p)$ is in the mesh $R$.
  \item For all $x$, $\boks{x}{p(i)-1} \in R$ if and only if
  $\boks{x}{p(i)} \in R$.
  \item For all $y$, if $\boks{i-1}{y} \in R$ then $\boks{i}{y} \in R$.
  \end{itemize}
  Then $(p,R) \asymp \big(p,R \cup \{(i,p(i)), (i,p(i)-1)\}\big)$.
\end{cor}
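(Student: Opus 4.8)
The plan is to obtain the corollary from two successive applications of the (one-square) shading lemma, Lemma~\ref{lem:shading}, and its southeast analogue, peeling off the two new squares one at a time. Set $R_1 = R \cup \{(i,p(i))\}$ and $R_2 = R_1 \cup \{(i,p(i)-1)\} = R \cup \{(i,p(i)),(i,p(i)-1)\}$. It suffices to prove $(p,R) \asymp (p,R_1)$ and $(p,R_1) \asymp (p,R_2)$, since $\asymp$ is transitive.

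For the first coincidence, I would verify the four hypotheses of Lemma~\ref{lem:shading} for $(p,R)$ at the index $i$. The first hypothesis of the corollary states that none of the four squares incident to the point $(i,p(i))$, namely $(i,p(i))$, $(i-1,p(i))$, $(i,p(i)-1)$, and $(i-1,p(i)-1)$, lies in $R$; this gives the first bullet of Lemma~\ref{lem:shading} outright, and it gives the second bullet in the strong form that \emph{neither} of $(i,p(i)-1)$, $(i-1,p(i))$ is in $R$. The third bullet of Lemma~\ref{lem:shading} is the implication $(x,p(i)-1)\in R \Rightarrow (x,p(i))\in R$ for $x \notin\{i-1,i\}$, which is one direction of the row-equivalence hypothesis of the corollary, and the fourth bullet is precisely the column-containment hypothesis restricted to $y \notin \{p(i)-1,p(i)\}$. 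Hence Lemma~\ref{lem:shading} gives $(p,R)\asymp(p,R_1)$.

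For the second coincidence, I would apply the southeast analogue of Lemma~\ref{lem:shading} to $(p,R_1)$, this time to adjoin the square $(i,p(i)-1)$ directly below the point. Reflecting the northeast statement across a horizontal axis (which complements the values of $p$ and swaps the two rows $p(i)-1$ and $p(i)$), the southeast analogue requires, for the mesh at hand: that neither $(i,p(i)-1)$ nor $(i-1,p(i))$ is in the mesh; that at most one of $(i,p(i))$ and $(i-1,p(i)-1)$ is in the mesh; that $(x,p(i))\in$ mesh implies $(x,p(i)-1)\in$ mesh for $x\notin\{i-1,i\}$; and that $(i-1,y)\in$ mesh implies $(i,y)\in$ mesh for $y\notin\{p(i)-1,p(i)\}$. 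In $R_1$ the squares $(i,p(i)-1)$ and $(i-1,p(i))$ are still absent, and of $(i,p(i))$ and $(i-1,p(i)-1)$ only the former has been added, so the first two conditions hold --- this is the one spot where it is essential that \emph{all four} squares incident to $(i,p(i))$ were excluded at the start, since we have just filled one of them in. The third condition is the other direction of the row-equivalence hypothesis (off column $i$ nothing changed in passing from $R$ to $R_1$), and the fourth is again the column-containment hypothesis off rows $p(i)-1,p(i)$ (in column $i$ only the square $(i,p(i))$ was added, and it lies in those two rows). So the southeast analogue yields $(p,R_1)\asymp(p,R_2)$, and combining the two coincidences gives $(p,R)\asymp(p,R_2)$, as asserted.

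I do not expect a genuine obstacle here: the argument is bookkeeping built on top of Lemma~\ref{lem:shading}. The only things needing care are pinning down the exact hypotheses of the southeast analogue and observing the two subtleties that distinguish the corollary's hypotheses from a naive guess --- the second hypothesis is stated as a biconditional precisely because its two directions are consumed by the two applications of the shading lemma going in opposite row-directions, and the first hypothesis excludes all four incident squares precisely so that the second application remains legal after the first square has been added.
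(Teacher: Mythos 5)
Your proposal is correct and follows exactly the paper's own proof: apply the northeast Shading Lemma at $(i,p(i))$ and then the southeast variant at $(i,p(i)-1)$, chaining the two coincidences. The only difference is that you spell out the verification of the hypotheses (including why the second application remains legal after the first square is added), which the paper dismisses as ``easy to see.''
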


\begin{figure}[htbp]
  $$
  \begin{tikzpicture}[scale=1]
    % shading
    \filldraw[fill=lightgray,draw=gray] (1,0) rectangle (3,1);
    \filldraw[fill=lightgray,draw=gray] (0,1) rectangle (1,3);
    \filldraw[fill=lightgray,draw=gray] (1,3) rectangle (3,4);
    \filldraw[fill=lightgray,draw=gray] (3,1) rectangle (4,3);
    % grid
    \foreach \x in {1,2,3} {
      \draw[gray] (\x,-0.5) -- (\x,4.5);
      \draw[gray] (-0.5,\x) -- (4.5,\x);
    }
    % arrows
    \draw[->, double] (1.5,0.5) -- (2.5,0.5);
    \draw[<->, double] (0.5,1.5) -- (0.5,2.5);
    \draw[->, double] (1.5,3.5) -- (2.5,3.5);
    \draw[<->, double] (3.5,1.5) -- (3.5,2.5);
    \fill[black] (2,2) circle (4pt);
  \end{tikzpicture}
  $$
  \caption{The conditions required to apply
    Corollary~\ref{cor:shadinglemmaextension}, the Double Shading Lemma
    (east).}\label{fig:doubleshading}
\end{figure}

\begin{proof}
  First use the northeast variant of the Shading Lemma on $(i,p(i))$ to
  establish that $(p,R) \asymp (p,R \cup \{(i,p(i))\})$.  It is easy to
  see that we can then apply the southeast variant of the Shading Lemma
  to $(i,p(i)-1)$ to get
  \[
  \big(p,R \cup \{(i,p(i))\}\big) \asymp
  \big(p,R \cup \{(i,p(i)), (i,p(i)-1)\}\big).\qedhere
  \]
\end{proof}

Corollary~\ref{cor:shadinglemmaextension} refers to the two squares
incident to an element of $G(p)$ on its east side. There are, of course,
analogous statements for the north, west, and south sides of elements of
$G(p)$.

It is interesting to note that there are mesh pattern coincidences that
are not detected by the Shading and Double Shading Lemmas.

\begin{ex}\label{ex:surprising}
  Corollary~\ref{cor:shadinglemmaextension} yields
  $$\pi_1 = (12,R_1) \;=\;
  \begin{tikzpicture}[scale=.4, baseline={([yshift=-2.5pt]current bounding box.center)}]
    \foreach \x in {(2,0)} {
      \fill[lightgray] \x rectangle ++ (1,1);
    }
    \foreach \x in {1,2} {
      \draw[gray] (0,\x) -- (3,\x);
      \draw[gray] (\x,0) -- (\x,3);
    }
    \foreach \x in {(1,1),(2,2)} {
      \fill[black] \x circle (5pt);
    }
  \end{tikzpicture}
  \;\,\asymp\,\;
  \begin{tikzpicture}[scale=.4, baseline={([yshift=-2.5pt]current bounding box.center)}]
    \foreach \x in {(2,0),(1,0),(0,0)} {\fill[lightgray] \x rectangle ++ (1,1);}
    \foreach \x in {1,2} {
      \draw[gray] (0,\x) -- (3,\x);
      \draw[gray] (\x,0) -- (\x,3);
    }
    \foreach \x in {(1,1),(2,2)} {\fill[black] \x circle (5pt);}
  \end{tikzpicture}
  \;\,\asymp\,\;
  \begin{tikzpicture}[scale=.4, baseline={([yshift=-2.5pt]current bounding box.center)}]
    \foreach \x in {(2,0),(1,0),(0,0),(1,1),(1,2)} {\fill[lightgray] \x rectangle ++ (1,1);}
    \foreach \x in {1,2} {
      \draw[gray] (0,\x) -- (3,\x);
      \draw[gray] (\x,0) -- (\x,3);
    }
    \foreach \x in {(1,1),(2,2)} {\fill[black] \x circle (5pt);}
  \end{tikzpicture}
  \;\,=\,\; (12,R_2) = \pi_2.
  $$
  It is, however, impossible to use Lemma~\ref{lem:shading} or
  Corollary~\ref{cor:shadinglemmaextension} to prove that any of the
  above patterns are coincident to
  $$\pi_3 = (12,R_3) \,=\,
  \begin{tikzpicture}[scale=.4, baseline={([yshift=-2.5pt]current bounding box.center)}]
    \foreach \x in {(0,0),(1,1),(2,0)} {
      \fill[lightgray] \x rectangle ++ (1,1);
    }
    \foreach \x in {1,2} {
      \draw[gray] (0,\x) -- (3,\x);
      \draw[gray] (\x,0) -- (\x,3);
    }
    \foreach \x in {(1,1),(2,2)} {
      \fill[black] \x circle (5pt);
    }
  \end{tikzpicture}\,.
  $$
  On the other hand, the mesh inclusions $R_1 \subset R_3 \subset R_2$
  imply that $\av(\pi_1) \subseteq \av(\pi_3) \subseteq \av(\pi_2)$.
  Since $\av(\pi_1) = \av(\pi_2)$, we must have that $\pi_3$ is, in
  fact, coincident with all three of the above mesh patterns.
\end{ex}

The following lemma records the phenomenon observed in
Example~\ref{ex:surprising}.

\begin{lem}[Closure Lemma] \label{lem:closure}
  If $(p,R) \asymp (p,R')$
  and $R \subseteq S \subseteq R'$, then
  $$(p,R) \asymp (p,S) \asymp (p,R').
  $$
\end{lem}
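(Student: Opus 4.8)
The plan is to exploit the monotonicity of avoidance sets under mesh inclusion, exactly as in Example~\ref{ex:surprising}. The key observation is that adding squares to the mesh of a pattern can only make it harder to contain: if $R \subseteq S$, then any occurrence of $(p,S)$ in a permutation $w$ is also an occurrence of $(p,R)$, since the region corresponding to $S$ contains the region corresponding to $R$, so if the former is point-free then so is the latter. Hence $\cont(p,S) \subseteq \cont(p,R)$, or equivalently $\av(p,R) \subseteq \av(p,S)$. This inclusion holds for arbitrary mesh patterns and requires no hypotheses.

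From here the argument is a two-line squeeze. Applying the inclusion-monotonicity to $R \subseteq S$ gives $\av(p,R) \subseteq \av(p,S)$, and applying it to $S \subseteq R'$ gives $\av(p,S) \subseteq \av(p,R')$. Chaining these yields $\av(p,R) \subseteq \av(p,S) \subseteq \av(p,R')$. But the hypothesis $(p,R) \asymp (p,R')$ says precisely that $\av(p,R) = \av(p,R')$, so the two ends of the chain coincide, forcing both inclusions to be equalities: $\av(p,R) = \av(p,S) = \av(p,R')$. By definition of coincidence, this is exactly $(p,R) \asymp (p,S) \asymp (p,R')$.

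I do not anticipate a genuine obstacle here — the lemma is essentially a formal consequence of the poset structure on meshes together with the definition of coincidence. The only point requiring a moment's care is the basic monotonicity claim ``$R \subseteq S$ implies $\av(p,R) \subseteq \av(p,S)$,'' which should be stated cleanly: given $w \in \av(p,R)$, suppose for contradiction $w \in \cont(p,S)$; then there is an occurrence of $p$ in $w$ whose $S$-region is empty of points of $G(w)$; since the $R$-region is contained in the $S$-region (Definition~\ref{defn:corresponding regions} assigns to each square a fixed rectangle depending only on the occurrence, so shading fewer squares gives a subregion), the $R$-region is also empty, making the same occurrence witness $w \in \cont(p,R)$, a contradiction. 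This sub-fact is implicitly used already in Example~\ref{ex:surprising} (``the mesh inclusions $R_1 \subset R_3 \subset R_2$ imply that $\av(\pi_1) \subseteq \av(\pi_3) \subseteq \av(\pi_2)$''), so it may simply be cited or restated in a sentence rather than belabored.
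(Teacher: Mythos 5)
Your proof is correct and is essentially the argument the paper itself uses (the lemma is stated as recording the phenomenon of Example~\ref{ex:surprising}, where the same sandwich $\av(\pi_1)\subseteq\av(\pi_3)\subseteq\av(\pi_2)$ combined with $\av(\pi_1)=\av(\pi_2)$ appears verbatim). Your explicit justification of the monotonicity step $R\subseteq S\Rightarrow\av(p,R)\subseteq\av(p,S)$ is a welcome addition that the paper leaves implicit.
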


We now extend the Shading Lemma even further to allow simultaneous
addition of multiple squares to a mesh. Before stating this precisely, we
illustrate the concept.

\begin{ex}\label{ex:simultaneous-shading}
  Consider the four mesh patterns in Figure~\ref{fig:simultaneous-shading}.
  Applying the Shading Lemma (northwest) to $(231,R)$ at $(1,2) \in
  G(231)$ gives the coincident pattern $(231,R_1)$. On the other hand,
  applying the Double Shading Lemma (north) to $(231,R)$ at $(2,3) \in
  G(231)$ yields the coincident pattern $(231,R_2)$.  It is easy to
  verify directly that these patterns are all coincident to $(231,R_3)$
  even though iterations of the Shading and Double Shading Lemmas will
  not produce this coincidence. For example, we cannot apply the Double
  Shading Lemma (north) to $(231,R_1)$ at $(1,2)$ because of $(0,3) \in
  R_1$. Similarly we cannot apply the Shading Lemma (northwest) to
  $(231,R_2)$ at $(1,2)$ because of $(1,3) \in R_2$.
\end{ex}

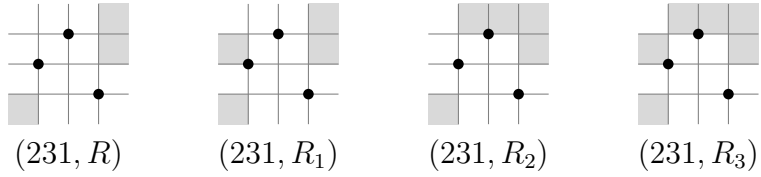
\begin{figure}[htbp]
  \[
  \begin{array}{c}
    \begin{tikzpicture}[scale=.4, baseline={([yshift=-2.5pt]current bounding box.center)}]
      \foreach \x in {(0,0),(3,2),(3,3)} {\fill[lightgray] \x rectangle ++ (1,1);}
      \foreach \x in {1,2,3} {
        \draw[gray] (0,\x) -- (4,\x);
        \draw[gray] (\x,0) -- (\x,4);
      }
      \foreach \x in {(1,2),(2,3),(3,1)} {\fill[black] \x circle (5pt);}
    \end{tikzpicture}\medskip\\
    (231,R)
  \end{array}
  \qquad
  \begin{array}{c}
    \begin{tikzpicture}[scale=.4, baseline={([yshift=-2.5pt]current bounding box.center)}]
      \foreach \x in {(0,0),(3,2),(3,3),(0,2)} {\fill[lightgray] \x rectangle ++ (1,1);}
      \foreach \x in {1,2,3} {
        \draw[gray] (0,\x) -- (4,\x);
        \draw[gray] (\x,0) -- (\x,4);
      }
      \foreach \x in {(1,2),(2,3),(3,1)} {\fill[black] \x circle (5pt);}
    \end{tikzpicture}\medskip\\
    (231,R_1)
  \end{array}
  \qquad
  \begin{array}{c}
    \begin{tikzpicture}[scale=.4, baseline={([yshift=-2.5pt]current bounding box.center)}]
      \foreach \x in {(0,0),(3,2),(3,3),(1,3),(2,3)} {\fill[lightgray] \x rectangle ++ (1,1);}
      \foreach \x in {1,2,3} {
        \draw[gray] (0,\x) -- (4,\x);
        \draw[gray] (\x,0) -- (\x,4);
      }
      \foreach \x in {(1,2),(2,3),(3,1)} {\fill[black] \x circle (5pt);}
    \end{tikzpicture}\medskip\\
     (231,R_2)
   \end{array}
  \qquad
  \begin{array}{c}
    \begin{tikzpicture}[scale=.4, baseline={([yshift=-2.5pt]current bounding box.center)}]
      \foreach \x in {(0,0),(3,2),(3,3),(0,2),(1,3),(2,3)} {\fill[lightgray] \x rectangle ++ (1,1);}
      \foreach \x in {1,2,3} {
        \draw[gray] (0,\x) -- (4,\x);
        \draw[gray] (\x,0) -- (\x,4);
      }
      \foreach \x in {(1,2),(2,3),(3,1)} {\fill[black] \x circle (5pt);}
    \end{tikzpicture}\medskip\\
    (231,R_3)
  \end{array}
  \]
  \caption{The four mesh patterns of
    Example~\ref{ex:simultaneous-shading}.}\label{fig:simultaneous-shading}
\end{figure}

\begin{defn}
  Let $(p,R)$ be a mesh pattern. A square that is incident to $g \in
  G(p)$ but not in $R$ is \emph{shadeable from $g$} if it satisfies the
  conditions of one of the Shading Lemma variants. A pair of adjacent
  squares that are both incident to $g \in G(p)$ and both not in $R$ are
  \emph{shadeable from $g$} if they satisfy the conditions of one of the
  Double Shading Lemma variants.
\end{defn}

\begin{lem}[Simultaneous Shading Lemma]\label{lem:simult-shading}
  Let $(p,R)$ be a mesh pattern with $p \in \mf{S}_k$. Fix $G \subseteq
  G(p)$ and let $s_g$ be a square or pair of adjacent squares that are
  shadeable from $g \in G$.  Then
  $(p,R) \asymp \left(p,R \cup S\right)$, where $S = \bigcup_{g \in G} s_g$.
\end{lem}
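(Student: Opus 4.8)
The plan is to reduce the simultaneous statement to the single-square (and single-pair) case by a careful bookkeeping argument, using the Closure Lemma to absorb the "interaction" between squares shaded from different points of $G$. The key observation is that each Shading Lemma variant, applied at a point $g \in G(p)$, has hypotheses that only constrain squares lying in the row of $g$, the column of $g$, and the two rows and two columns immediately adjacent to $g$ — in other words, in a bounded neighborhood of $g$. When two points $g, g' \in G$ are far apart, shading $s_{g'}$ cannot destroy the hypotheses that made $s_g$ shadeable, so we can shade them one at a time in any order. The only difficulty is when the neighborhoods overlap, and this is exactly the situation in Example~\ref{ex:simultaneous-shading}: there, shading $s_{g_1}$ first ruins the hypotheses needed to shade $s_{g_2}$, and vice versa, yet the final pattern is still coincident.

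First I would set $R^* = R \cup S$ and observe, via Lemma~\ref{lem:enclosed diagonals are stable}(a)-style monotonicity reasoning, that it suffices to show $\av(p, R) = \av(p, R^*)$; since $R \subseteq R^*$, one inclusion of avoidance sets is automatic, and we need only prove that every $w$ avoiding $(p,R)$ also avoids $(p, R^*)$, equivalently, that no occurrence of $p$ in any $w$ has its $R^*$-region empty while its $R$-region is nonempty. The strategy is: for each $g \in G$, the Shading (or Double Shading) Lemma gives a coincidence $(p, R) \asymp (p, R \cup s_g)$, hence $\av(p,R) = \av(p, R\cup s_g)$; we want to run these simultaneously. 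Here I would invoke the Closure Lemma in the following form. Let $T = R \cup \bigcup_{g\in G} s_g = R^*$. For a single $g$, the Shading Lemma yields $(p,R) \asymp (p, R \cup s_g)$. Applying the Closure Lemma with $R \subseteq R \cup s_g \subseteq$ (nothing larger yet) is not enough on its own; instead, the real engine is: if we can independently establish $(p,R) \asymp (p, R^*)$ by exhibiting, for each potential "bad" occurrence, a witness square forced into the mesh region, we are done. So I would argue at the level of occurrences directly.

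Concretely: suppose $w$ contains $(p, R^*)$ via some occurrence $\mathcal{O}$, i.e., the $R^*$-region of $\mathcal{O}$ in $G(w)$ is empty of points of $G(w)$. I must show $w$ contains $(p, R)$, and the natural candidate is the same occurrence $\mathcal{O}$ — but its $R$-region is smaller, so we must show it is also empty. Suppose not: some point $q \in G(w)$ lies in the $R$-region but (necessarily) in one of the cells of $S \setminus R$, say in a cell of $s_g$ for some $g \in G$. Now I would use the Shading Lemma hypotheses \emph{for that particular $g$} to derive a contradiction, exactly as in the original proof of Lemma~\ref{lem:shading}: the hypotheses guarantee that the presence of $q$ in cell $s_g$ forces, via a "sliding" argument along the row or column of $g$, the existence of another point of $G(w)$ in a cell that \emph{is} already in $R$ — and which also lies in the $R^*$-region (since $R \subseteq R^*$) — contradicting emptiness of the $R^*$-region. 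The crucial point that makes the simultaneous version work, and the step I expect to be the main obstacle, is verifying that this sliding argument for $g$ uses only cells whose $R^*$-membership equals their $R$-membership, or else lands in a cell of $R$: one must check that the extra cells of $S$ contributed by \emph{other} points $g' \neq g$ do not short-circuit the argument for $g$. This is where the precise geometry of the four Shading Lemma variants matters — the hypotheses of each variant pin down the shading status of cells in the cross through $g$, and the cells of $s_{g'}$ for $g' \neq g$, being incident to a different graph point, either coincide with cells already controlled by $g$'s hypotheses or lie outside the region the sliding argument traverses. I would handle this by a short case analysis on the relative positions of $g$ and $g'$ (same row, same column, adjacent, or disjoint neighborhoods), noting that in every case the conclusion "$q$ in $s_g$ forces a point of $G(w)$ in an $R^*$-cell" survives. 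Once that is established for every $g \in G$, emptiness of the $R^*$-region is contradicted, so $\mathcal{O}$ witnesses containment of $(p,R)$, and together with the trivial inclusion we conclude $\av(p,R) = \av(p,R^*)$, i.e. $(p,R) \asymp (p, R \cup S)$.
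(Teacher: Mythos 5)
There is a genuine gap, and it sits at the very foundation of your argument: you have the two directions of the coincidence reversed. Writing $R^* = R\cup S$ with $R\subseteq R^*$, the automatic inclusion is $\av(p,R)\subseteq\av(p,R^*)$ (equivalently $\cont(p,R^*)\subseteq\cont(p,R)$): any occurrence of $p$ whose $R^*$-region is empty of points of $G(w)$ a fortiori has its $R$-region empty. But that is exactly the statement you set out to prove --- ``every $w$ avoiding $(p,R)$ also avoids $(p,R^*)$'' --- and your occurrence-level argument (take an occurrence $\mathcal{O}$ of $(p,R^*)$ and show its $R$-region is also empty) is a proof of a triviality. Indeed the point $q$ you posit, lying ``in the $R$-region but \ldots\ in one of the cells of $S\setminus R$,'' cannot exist: cells of $S\setminus R$ are by definition not part of the $R$-region. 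The direction that actually needs proof is the reverse one: every $w$ \emph{containing} $(p,R)$ also contains $(p,R^*)$. There the given occurrence of $(p,R)$ may genuinely have points of $G(w)$ in the regions corresponding to $S\setminus R$ --- your own discussion of Example~\ref{ex:simultaneous-shading} concedes as much --- so one must produce a \emph{different} occurrence, and no case analysis performed on a fixed occurrence can do that.

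The missing idea, which is the entire content of the paper's proof, is an iterative re-selection procedure together with a termination argument. Starting from an occurrence of $(p,R)$, if some region $\widetilde{s}_h$ corresponding to $s_h$ is nonempty, replace the pattern point at $h$ by the extremal point of $G(w)\cap\widetilde{s}_h$; shadeability of $s_h$ guarantees the result is again an occurrence of $(p,R)$. This replacement may break emptiness of other regions $\widetilde{s}_g$, so the step must be repeated, and the paper proves termination by associating to each $g$ one or two lines in $G(w)$ that move only monotonically (each in a fixed direction) under these replacements; since $G(w)$ is finite the process halts with every $\widetilde{s}_g$ empty. Your proposal contains neither the re-selection step in this direction nor any termination argument, and the sequential plan from your first paragraph --- shade one $s_g$ at a time via Lemma~\ref{lem:shading} and absorb interactions with Lemma~\ref{lem:closure} --- fails precisely in the overlapping-neighborhood case you yourself identify, which is the case the Simultaneous Shading Lemma exists to handle.
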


Before giving a proof of Lemma~\ref{lem:simult-shading} we illustrate
its idea in the following example.

\begin{ex}\label{ex:simult-shading}
  Let the two mesh patterns $(p,R)$ and $(p,R')$ be as in
  Figure~\ref{fig:simult-shading}. Let $G = \{g_1,g_2,g_3\} \subset
  G(p)$ where $g_i = (i,p(i))$. If we let $s_1 = \{ (0,0), (1,0) \}$,
  $s_2 = \{ (1,4), (2,4) \}$, and $s_3 = \{(3,1)\}$, then $R' = R \cup
  s_1 \cup s_2 \cup s_3$. Note that, for $i\in \{1,2,3\}$, the square
  $s_i$ is shadeable from $g_i$. The Simultaneous Shading Lemma claims
  that $(p,R) \asymp (p,R')$.

  \begin{figure}[htbp]
  $$
  (p,R) =
  \begin{tikzpicture}[scale=.4, baseline={([yshift=-2.5pt]current bounding box.center)}]
    \foreach \x in {(4,0),(4,1)} {\fill[lightgray] \x rectangle ++ (1,1);}
    \foreach \x in {1,2,3,4} {
      \draw[gray] (0,\x) -- (5,\x);
      \draw[gray] (\x,0) -- (\x,5);
    }
    \foreach \x in {(1,1),(2,4),(3,2),(4,3)} {\fill[black] \x circle (5pt);}
  \end{tikzpicture}
  \qquad\quad
  (p,R') =
  \begin{tikzpicture}[scale=.4, baseline={([yshift=-2.5pt]current bounding box.center)}]
    \foreach \x in {(0,0),(1,0),(1,4),(2,4),(3,1),(4,0),(4,1)} {
      \fill[lightgray] \x rectangle ++ (1,1);
    }
    \foreach \x in {1,2,3,4} {
      \draw[gray] (0,\x) -- (5,\x);
      \draw[gray] (\x,0) -- (\x,5);
    }
    \foreach \x in {(1,1),(2,4),(3,2),(4,3)} {\fill[black] \x circle (5pt);}
  \end{tikzpicture}
  $$
  \caption{The two coincident mesh patterns of
    Example~\ref{ex:simult-shading}.}\label{fig:simult-shading}
\end{figure}
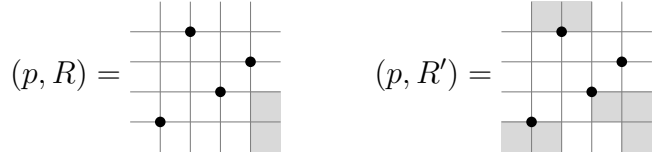

  We wish to show that any permutation containing $(p,R)$ also contains
  $(p,R')$; since $R\subset R'$ we already know that
  $\cont(p,R) \supseteq \cont(p,R')$.
  Let $w = 482951(10)376 \in \mf{S}_{10}$ and consider the occurrence
  $4857$ of $(p,R)$ in $w$. For this to be an occurrence of $(p,R')$ in
  $w$, we would need to be able to shade the regions corresponding to
  $s_1 \cup s_2 \cup s_3$ without intersecting elements of
  $G(w)$. Unfortunately $(4,9) \in G(w)$ intersects the region $[1,5]
  \times [8,11]$ corresponding to $s_2$. If we consider, instead, the
  occurrence $4957$ of $p$ in $w$, then we would need to worry about
  $(3,2) \in G(w)$ intersecting the region $[0,0] \times [4,4]$
  corresponding to $s_1$. So let us, instead, consider the occurrence
  $2957$ of $p$ in $w$. Now we must worry about $(7,3) \in G(w)$
  intersecting the region $[5,9]\times[2,5]$ corresponding to $s_3$. We
  might then consider the occurrence $2937$ of $p$ in $w$, and so
  on. This procedure is illustrated in Figure~\ref{fig:sslexamplecont},
  where dashed rectangles indicate the regions corresponding to the
  $s_i$ and arrows indicate the ensuing re-selection of the pattern
  occurrence. The last image in the figure depicts an occurrence of
  $(p,R')$ in $w$.
\end{ex}

\begin{center}
  \begin{figure}[ht]
    $$
    \begin{array}{cc}
      \begin{tikzpicture}[scale=0.37]
        \foreach \x in {(9,0)} {\fill[lightgray] \x rectangle ++ (2,5);}
        \foreach \x in {1,2,3,4,5,6,7,8,9,10} {
          \draw[gray] (0,\x) -- (11,\x);
          \draw[gray] (\x,0) -- (\x,11);
        }
        \draw[thick, white] (0,0) rectangle (2,4);  % s1
        \draw[thick, white] (1,8) rectangle (5,11); % s2
        \draw[thick, white] (5,4) rectangle (9,5);  % s3
        \draw[thick, dashed] (0,0) rectangle (2,4);  % s1
        \draw[thick, dashed] (1,8) rectangle (5,11); % s2
        \draw[thick, dashed] (5,4) rectangle (9,5);  % s3
        \draw[thick] (9,0) rectangle (11,4); % shading
        \draw[thick] (9,4) rectangle (11,5); % shading
        \foreach \x in {(1,4),(2,8),(3,2),(4,9),(5,5),(6,1),(7,10),(8,3),(9,7),(10,6)} {
          \fill[black] \x circle (5pt);
        }
        \foreach \x in {(1,4),(2,8),(5,5),(9,7)} {\draw \x circle (10pt);}
        \draw[thick, ->] (2,8) -- (4-0.25,9-0.25);   % arrow
      \end{tikzpicture}
      \quad&\quad
      \begin{tikzpicture}[scale=0.37]
        \foreach \x in {(9,0)} {\fill[lightgray] \x rectangle ++ (2,5);}
        \foreach \x in {1,2,3,4,5,6,7,8,9,10} {
          \draw[gray] (0,\x) -- (11,\x);
          \draw[gray] (\x,0) -- (\x,11);
        }
        \draw[thick, white] (0,0) -- (4,0) -- (4,4) -- (0,4) -- cycle;   % s1
        \draw[thick, white] (1,9) -- (5,9) -- (5,11) -- (1,11) -- cycle; % s2
        \draw[thick, white] (5,4) -- (9,4) -- (9,5) -- (5,5) -- cycle;   % s3
        \draw[thick, dashed] (0,0) -- (4,0) -- (4,4) -- (0,4) -- cycle;   % s1
        \draw[thick, dashed] (1,9) -- (5,9) -- (5,11) -- (1,11) -- cycle; % s2
        \draw[thick, dashed] (5,4) -- (9,4) -- (9,5) -- (5,5) -- cycle;   % s3
        \draw[thick] (9,0) rectangle (11,4); % shading
        \draw[thick] (9,4) rectangle (11,5); % shading
        \foreach \x in {(1,4),(2,8),(3,2),(4,9),(5,5),(6,1),(7,10),(8,3),(9,7),(10,6)} {
          \fill[black] \x circle (5pt);
        }
        \foreach \x in {(1,4),(4,9),(5,5),(9,7)} {\draw \x circle (10pt);}
        \draw[thick, ->] (1,4) -- (3-0.25,2+0.25);   % arrow
      \end{tikzpicture}\smallskip\\
      \text{(a)} \quad&\quad \text{(b)} \\[3ex]
      \begin{tikzpicture}[scale=0.37]
        \foreach \x in {(9,0)} {\fill[lightgray] \x rectangle ++ (2,5);}
        \foreach \x in {1,2,3,4,5,6,7,8,9,10} {
          \draw[gray] (0,\x) -- (11,\x);
          \draw[gray] (\x,0) -- (\x,11);
        }
        \draw[thick, white] (0,0) -- (4,0) -- (4,2) -- (0,2) -- cycle;   % s1
        \draw[thick, white] (3,9) -- (5,9) -- (5,11) -- (3,11) -- cycle; % s2
        \draw[thick, white] (5,2) -- (9,2) -- (9,5) -- (5,5) -- cycle;   % s3
        \draw[thick, dashed] (0,0) -- (4,0) -- (4,2) -- (0,2) -- cycle;   % s1
        \draw[thick, dashed] (3,9) -- (5,9) -- (5,11) -- (3,11) -- cycle; % s2
        \draw[thick, dashed] (5,2) -- (9,2) -- (9,5) -- (5,5) -- cycle;   % s3
        \draw[thick] (9,0) rectangle (11,2); % shading
        \draw[thick] (9,2) rectangle (11,5); % shading
        \foreach \x in {(1,4),(2,8),(3,2),(4,9),(5,5),(6,1),(7,10),(8,3),(9,7),(10,6)} {
          \fill[black] \x circle (5pt);
        }
        \foreach \x in {(3,2),(4,9),(5,5),(9,7)} {\draw \x circle (10pt);}
        \draw[thick, ->] (5,5) -- (8-0.25,3+0.25);   % arrow
      \end{tikzpicture}
      \quad&\quad
      \begin{tikzpicture}[scale=0.37]
        \foreach \x in {(9,0)} {\fill[lightgray] \x rectangle ++ (2,3);}
        \foreach \x in {1,2,3,4,5,6,7,8,9,10} {
          \draw[gray] (0,\x) -- (11,\x);
          \draw[gray] (\x,0) -- (\x,11);
        }
        \draw[thick, white] (0,0) -- (4,0) -- (4,2) -- (0,2) -- cycle;   % s1
        \draw[thick, white] (3,9) -- (8,9) -- (8,11) -- (3,11) -- cycle; % s2
        \draw[thick, white] (8,2) -- (9,2) -- (9,3) -- (8,3) -- cycle;   % s3
        \draw[thick, dashed] (0,0) -- (4,0) -- (4,2) -- (0,2) -- cycle;   % s1
        \draw[thick, dashed] (3,9) -- (8,9) -- (8,11) -- (3,11) -- cycle; % s2
        \draw[thick, dashed] (8,2) -- (9,2) -- (9,3) -- (8,3) -- cycle;   % s3
        \draw[thick] (9,0) rectangle (11,2); % shading
        \draw[thick] (9,2) rectangle (11,3); % shading
        \foreach \x in {(1,4),(2,8),(3,2),(4,9),(5,5),(6,1),(7,10),(8,3),(9,7),(10,6)} {
          \fill[black] \x circle (5pt);
        }
        \foreach \x in {(3,2),(4,9),(8,3),(9,7)} {\draw \x circle (10pt);}
        \draw[thick, ->] (4,9) -- (7-0.25,10-0.25);   % arrow
      \end{tikzpicture}\smallskip\\
      \text{(c)} \quad&\quad \text{(d)} \\[3ex]
      \begin{tikzpicture}[scale=0.37]
        \foreach \x in {(9,0)} {\fill[lightgray] \x rectangle ++ (2,3);}
        \foreach \x in {1,2,3,4,5,6,7,8,9,10} {
          \draw[gray] (0,\x) -- (11,\x);
          \draw[gray] (\x,0) -- (\x,11);
        }
        \draw[thick, white] (0,0) -- (7,0) -- (7,2) -- (0,2) -- cycle;      % s1
        \draw[thick, white] (3,10) -- (8,10) -- (8,11) -- (3,11) -- cycle; % s2
        \draw[thick, white] (8,2) -- (9,2) -- (9,3) -- (8,3) -- cycle;      % s3
        \draw[thick, dashed] (0,0) -- (7,0) -- (7,2) -- (0,2) -- cycle;      % s1
        \draw[thick, dashed] (3,10) -- (8,10) -- (8,11) -- (3,11) -- cycle; % s2
        \draw[thick, dashed] (8,2) -- (9,2) -- (9,3) -- (8,3) -- cycle;      % s3
        \draw[thick] (9,0) rectangle (11,2); % shading
        \draw[thick] (9,2) rectangle (11,3); % shading
        \foreach \x in {(1,4),(2,8),(3,2),(4,9),(5,5),(6,1),(7,10),(8,3),(9,7),(10,6)} {
          \fill[black] \x circle (5pt);
        }
        \foreach \x in {(3,2),(7,10),(8,3),(9,7)} {\draw \x circle (10pt);}
        \draw[thick, ->] (3,2) -- (6-0.25,1+0.25);   % arrow
      \end{tikzpicture}
      \quad&\quad
      \begin{tikzpicture}[scale=0.37]
        \foreach \x in {(9,0)} {\fill[lightgray] \x rectangle ++ (2,3);}
        \foreach \x in {(0,0)} {\fill[lightgray] \x rectangle ++ (7,1);}
        \foreach \x in {(6,10)} {\fill[lightgray] \x rectangle ++ (2,1);}
        \foreach \x in {(8,1)} {\fill[lightgray] \x rectangle ++ (1,2);}
        \foreach \x in {1,2,3,4,5,6,7,8,9,10} {
          \draw[gray] (0,\x) -- (11,\x);
          \draw[gray] (\x,0) -- (\x,11);
        }
        \foreach \x in {(1,4),(2,8),(3,2),(4,9),(5,5),(6,1),(7,10),(8,3),(9,7),(10,6)} {
          \fill[black] \x circle (5pt);
        }
        \foreach \x in {(6,1),(7,10),(8,3),(9,7)} {\draw \x circle (10pt);}
        \draw[thick] (0,0) -- (7,0) -- (7,1) -- (0,1) -- cycle;     % s1
        \draw[thick] (6,10) -- (8,10) -- (8,11) -- (6,11) -- cycle; % s2
        \draw[thick] (8,1) -- (9,1) -- (9,3) -- (8,3) -- cycle;     % s3
        \draw[thick] (9,0) rectangle (11,1); % shading
        \draw[thick] (9,1) rectangle (11,3); % shading
      \end{tikzpicture}\smallskip\\
      \text{(e)} \quad&\quad \text{(f)}
    \end{array}
    $$
    \caption{Maintaining the notation of
      Example~\ref{ex:simult-shading}, these figures depict the
      discovery of an occurrence of $(p,R')$ in $w$.}
    \label{fig:sslexamplecont}
  \end{figure}
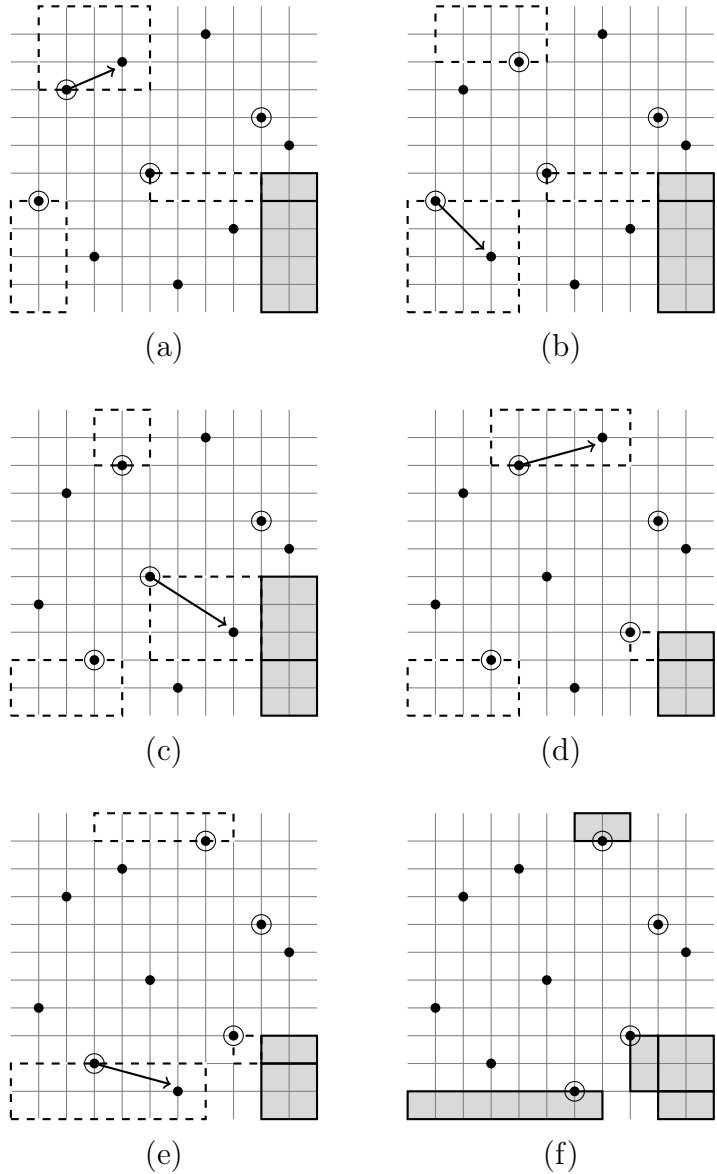
\end{center}

Example~\ref{ex:simult-shading} and Figure~\ref{fig:sslexamplecont}
describe the strategy to the proof of Lemma~\ref{lem:simult-shading},
but we need to show that the procedure terminates.

\begin{proof}[Proof of Lemma~\ref{lem:simult-shading}]
  Let $w$ be a permutation containing the mesh pattern $(p,R)$.  We want
  to show that $w$ also contains an occurrence of $(p,R')$. Fix an
  occurrence of $(p,R)$ in $w$. Each $g \in G \subseteq G(p)$
  corresponds to some $(i_g, w(i_g)) \in G(w)$ in the occurrence of
  $(p,R)$ in $w$. Together with $s_g$, this point will determine
  \begin{itemize}
  \item the horizontal line $y = w(i_g)$ if $s_g$ is a pair of
    horizontally adjacent squares,
  \item the vertical line $x = i_g$ if $s_g$ is a pair of vertically
    adjacent squares, or
  \item the pair of lines $x = i_g$ and $y = w(i_g)$ if $s_g$ is a
    single square.
  \end{itemize}
  For each $g \in G$, let $\widetilde{s}_g$ be the region in $w$ that
  corresponds to $s_g$, as defined by the given occurrence of $(p,R)$ in
  $w$. If each $\widetilde{s}_g$ is empty then we have already found an
  occurrence of $(p,R')$ in $w$. Otherwise, fix $h \in G$ so that
  $\widetilde{s}_h$ is nonempty.  There are essentially two different
  options for $s_h$:
  \begin{enumerate}
  \item\label{case-a} $s_h$ is a pair of adjacent squares;
  \item\label{case-b} $s_h$ is a single square.
  \end{enumerate}

  For case~\eqref{case-a} suppose that $s_h$ is a pair of horizontally
  adjacent squares that are north of and incident to $h \in G(p)$. Now
  replace $(i_h, w(i_h))$ in the occurrence of $(p,R)$ by the element of
  $(i'_h, w(i'_h)) \in \widetilde{s}_h \cap G(w)$ having
  the largest $y$-coordinate.  Because $s_h$ was shadeable from $h$ this
  will produce a new occurrence of $(p,R)$ in $w$.  Also, note that
  $s_h$ now determines the line $y = w(i'_h)$, which is above the
  previously identified line $y = w(i_h)$.

  For case~\eqref{case-b} suppose $s_h$ is a single square northeast of
  and incident to $h\in G(p)$. Now replace $(i_h, w(i_h))$ in the
  occurrence of $(p,R)$ by the element of
  $(i'_h, w(i'_h)) \in \widetilde{s}_h \cap G(w)$ having
  the largest $y$-coordinate. (Here we could equally well have chosen the
  largest $x$-coordinate.) Because $s_h$ was shadeable from $h$ this
  will produce a new occurrence of $(p,R)$ in $w$.  Also, note that
  $s_h$ now determines the lines $x=i'_h$ and $y = w(i'_h)$, which are
  to the right of and above, respectively, the previously identified
  lines.

  Thus at each step, we shift at most one horizontal line and at most
  one vertical line, and each such line can only move in a fixed
  direction. The graph $G(w)$ is finite, and so the shifting will have
  to terminate, at which time $\{\widetilde{s}_g : g \in G\} \cap G(w)$
  will be empty. Thus the procedure ends by producing an occurrence of
  $(p,R')$ in $w$.
\end{proof}

The Simultaneous Shading Lemma explains, among other things, the
coincidences discussed in Example~\ref{ex:simultaneous-shading}.
The Simultaneous Shading Lemma together with the Closure Lemma are
powerful enough to explain every coincidence among mesh patterns of
$1$ or $2$ letters, \emph{except} for the coincidence
\[
\gamma_1 \;=\;
\begin{tikzpicture}[scale=.4, baseline=14pt]
  \foreach \x in {(0,1),(0,2),(1,1),(1,2),(2,0)} {\fill[lightgray] \x rectangle ++ (1,1);}
  \foreach \x in {1,2} {
    \draw[gray] (0,\x) -- (3,\x);
    \draw[gray] (\x,0) -- (\x,3);
  }
  \foreach \x in {(1,1),(2,2)} {\fill[black] \x circle (5pt);}
\end{tikzpicture}
\;\,\asymp\,\;
\begin{tikzpicture}[scale=.4, baseline=14pt]
  \foreach \x in {(0,2),(1,0),(1,1),(2,0),(2,1)} {\fill[lightgray] \x rectangle ++ (1,1);}
  \foreach \x in {1,2} {
    \draw[gray] (0,\x) -- (3,\x);
    \draw[gray] (\x,0) -- (\x,3);
  }
  \foreach \x in {(1,1),(2,2)} {\fill[black] \x circle (5pt);}
\end{tikzpicture}
\;=\; \gamma_2,
\]
and its symmetries. It is easy to verify that a permutation $w$ contains
either of $\gamma_1$ and $\gamma_2$ if and only if $w$ can be written as
a direct sum $w = u\oplus v$ of two nonempty permutations $u$ and $v$.
Under certain conditions the coincidence between $\gamma_1$ and
$\gamma_2$ implies coincidences of larger patterns, where $\gamma_1$ and
$\gamma_2$ appear as embedded patterns. An example of this is given in
Figure~\ref{fig:2comp}.

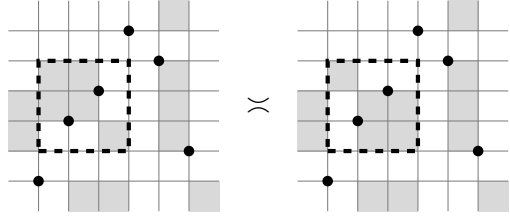
\begin{figure}[htbp]
  \[
  \begin{tikzpicture}[scale=.4, baseline=37pt]
    \foreach \x in {(0,2),(0,3),(1,3),(1,4),(2,0),(2,3),(2,4),(3,0),(3,2),
      (5,2),(5,3),(5,4),(5,6),(6,0)}{
      \fill[lightgray] \x rectangle ++ (1,1);
    }
    \foreach \x in {(1,3),(1,4),(2,3),(2,4),(3,2)}{
      \fill[lightgray] \x rectangle ++ (1,1);
    }
    \foreach \x in {1,2,3,4,5,6} {
      \draw[gray] (0,\x) -- (7,\x);
      \draw[gray] (\x,0) -- (\x,7);
    }
    \draw[ultra thick, dashed] (1,2) rectangle (4,5);
    \foreach \x in {(1,1),(2,3),(3,4),(4,6),(5,5),(6,2)} {\fill[black] \x circle (5pt);}
  \end{tikzpicture}
  \;\;\asymp\;\;
  \begin{tikzpicture}[scale=.4, baseline=37pt]
    \foreach \x in {(0,2),(0,3),(1,4),(2,0),(2,2),(2,3),(3,0),(3,2),(3,3),
      (5,2),(5,3),(5,4),(5,6),(6,0)}{
      \fill[lightgray] \x rectangle ++ (1,1);
    }
    \foreach \x in {(1,4),(2,2),(2,3),(3,2),(3,3)}{
      \fill[lightgray] \x rectangle ++ (1,1);
    }
    \foreach \x in {1,2,3,4,5,6} {
      \draw[gray] (0,\x) -- (7,\x);
      \draw[gray] (\x,0) -- (\x,7);
    }
    \draw[ultra thick, dashed] (1,2) rectangle (4,5);
    \foreach \x in {(1,1),(2,3),(3,4),(4,6),(5,5),(6,2)} {\fill[black] \x circle (5pt);}
  \end{tikzpicture}
  \]
  \caption{Two mesh patterns whose coincidence is due to the coincidence
    of two smaller embedded mesh patterns.}\label{fig:2comp}
\end{figure}

\section{Future research}\label{sec:future}

The tools we derived in this paper go some way toward characterizing
coincidence of mesh patterns, but they are not a complete answer. We
have written tests to apply our results to all mesh patterns of small
lengths, partitioning them into coincident classes, and not all
coincidences can be detected by the methods described in this paper. For example, the following
coincidence cannot be proved by the Simultaneous Shading Lemma, nor by
the additional use of some of the other techniques addressed above.

\begin{ex}
  Consider the following two mesh patterns below.
  $$
  \pi = (123,R) \;=\;
  \begin{tikzpicture}[scale=.4, baseline=21pt]
    \foreach \x in {(0,0),(0,1),(1,0),(2,0),(2,2),(3,0),(3,2),(3,3)} {
      \fill[lightgray] \x rectangle ++(1,1);}
    \foreach \x in {1,2,3} {
      \draw[gray] (\x,0) -- (\x,4);
      \draw[gray] (0,\x) -- (4,\x);
    }
    \foreach \x in {(1,1),(2,2),(3,3)} {\fill[black] \x circle (5pt);}
  \end{tikzpicture}
  \,\;\asymp\;\,
  \begin{tikzpicture}[scale=.4, baseline=21pt]
    \foreach \x in {(0,0),(0,1),(1,0),(2,0),(2,1),(2,2),(3,0),(3,2),(3,3)} {
      \fill[lightgray] \x rectangle ++(1,1);}
    \foreach \x in {1,2,3} {
      \draw[gray] (\x,0) -- (\x,4);
      \draw[gray] (0,\x) -- (4,\x);}
    \foreach \x in {(1,1),(2,2),(3,3)} {\fill[black] \x circle (5pt);}
  \end{tikzpicture}
  \;=\; (123,R') = \pi'
  $$
  Because $R \subset R'$, we have that $\av(\pi) \subseteq \av(\pi')$.
  Now suppose $w$ has an occurrence of $\pi$ indexed by $i_1 < i_2 < i_3$.
  If the region corresponding to the square $(2,1)$ does not
  intersect $G(w)$ for this occurrence, then it is an occurrence of
  $\pi'$ as well. If that region does intersect $G(w)$, then let $g \in
  G(w)$ be the rightmost element of that intersection. If there are no
  elements of $G(w)$ northeast of $g$ and also south of $w(i_2)$ (given
  the choice of $g$, such points would necessarily be in the region
  corresponding to $(3,1)$ for this occurrence), then
  $\{(i_1,w(i_1)),g,(i_3,w(i_3))\}$ is an occurrence of $\pi'$ in
  $w$. On the other hand, suppose there are such points. If they are in
  descending order and $h$ is the leftmost (greatest) among them, then
  $\{(i_1,w(i_1)),g,h\}$ is an occurrence of $\pi'$ in $w$. If they are
  not in descending order, then we can apply the Simultaneous Shading
  Lemma to an ascent among them, producing the occurrence of $\pi'$ that
  we seek.
\end{ex}

We are left, then, with two questions. Under what conditions are two
arbitrary mesh patterns coincident? Is it possible to describe the
maximal set of mesh patterns for which coincidence depends only on
enclosed diagonals?

\end{document}